\date{}
\newtheorem{thm}{Theorem}[section]
\newtheorem{theorem}{Theorem}
\newtheorem{lemma}[thm]{Lemma}
\theoremstyle{definition}
\newtheorem{definition}[thm]{Definition}
\newtheorem{defn}[thm]{Definition}
\newtheorem{remarks}[thm]{Remarks}
\newtheorem{remark}[thm]{Remark}
\newtheorem{examples}[thm]{Examples}
\renewcommand{\leq}{\leqslant}
\renewcommand{\geq}{\geqslant}
\newcommand{\onto}{\twoheadrightarrow}
\newcommand{\E}{\mathbb{E}}
\renewcommand{\mod}{\operatorname{mod}}
\renewcommand{\Im}{\operatorname{Im}}
\newcommand{\D}{D}
\newcommand{\Coh}{\operatorname{Coh}}
\newcommand{\Aut}{\operatorname{Aut}}
\newcommand{\isom}{\cong}
\newcommand{\tensor}{\otimes}
\newcommand{\im}{\operatorname{im}}
\newcommand{\C}{\mathbb C}
\newcommand{\TT}{\mathcal T}
\newcommand{\RR}{\mathbb{R}}
\renewcommand{\im}{\operatorname{im}}
\newcommand{\coker}{\operatorname{coker}}
\newcommand{\F}{\mathbb F}
\newcommand{\Z}{\mathbb Z}
\newcommand{\A}{\mathcal A}
\newcommand{\CC}{\mathcal C}
\newcommand{\OO}{\mathcal O}
\newcommand{\id}{\operatorname{id}}
\newcommand{\Ext}{\operatorname{Ext}}
\newcommand{\Hom}{\operatorname{Hom}}
\newcommand{\lRa}[1]{\xrightarrow{\ #1\ }}
\newcommand{\lra}{\longrightarrow}
\newcommand{\R}{{\RR}}
\newcommand{\FF}{\mathcal{F}}
\newcommand{\ch}{{\rm ch}}
\newcommand{\M}{{\mathcal{M}}}
\renewcommand{\O}{\OO}
\newcommand{\Hilb}{\operatorname{Hilb}}
\newcommand{\PHilb}{\operatorname{Pairs}}
\newcommand{\Quot}{\operatorname{Quot}}
\newcommand{\an}{\operatorname{an}}
\newcommand{\naive}{naive}
\newcommand{\DT}{\operatorname{DT}}
\newcommand{\ET}{\operatorname{{DT}^{\naive}}}
\newcommand{\qET}{\operatorname{{q-DT}^{\naive}}}
\newcommand{\PT}{\operatorname{PT}}
\newcommand{\EP}{\operatorname{{PT}^{\naive}}}
\newcommand{\Obj}{\operatorname{Obj}}
\newcommand{\Q}{\mathbb{Q}}
\newcommand{\GL}{\operatorname{GL}}
\newcommand{\Gr}{\operatorname{Gr}}
\newcommand{\<}{\langle}
\renewcommand{\>}{\rangle}
\newcommand{\ahom}{\dim_\C \Hom_\A}
\newcommand{\aext}{\dim_\C \Ext_\A}
\newcommand{\X}{\mathcal{X}}
\newcommand{\St}{\operatorname{St}}
\renewcommand{\S}{\mathbb{S}}
\newcommand{\reg}{\operatorname{reg}}
\newcommand{\Hall}{\operatorname{Hall}}
\newcommand{\mot}{{\rm mot}}
\newcommand{\fin}{{\rm fty}}
\newcommand{\hHall}{{\Hall}^{\,\wedge}}
\newcommand{\Spec}{\operatorname{Spec}}
\newcommand{\Stab}{\operatorname{Stab}}
\renewcommand{\P}{\mathcal{P}}
\newcommand{\IH}{\mathbb{H}}
\newcommand{\T}{\mathbb{T}}
\renewcommand{\ss}{{\rm ss}}
\newcommand{\Per}{\operatorname{Per}}
\newcommand{\Var}{\operatorname{Var}}
\newcommand{\I}{{\mathcal I}}
\newcommand{\cT}{\mathcal{T}}
\newcommand{\qDT}{\operatorname{q-DT}}
\begin{document}


\title{ Hall algebras  and Donaldson-Thomas invariants}
\author{Tom Bridgeland}
\maketitle

\begin{abstract}This is a survey article about Hall algebras and their applications to the 
study of motivic  invariants of moduli spaces of coherent sheaves on Calabi-Yau threefolds. The ideas presented here are mostly due to  Joyce, Kontsevich, Reineke, Soibelman and Toda.
\end{abstract}

\section{Introduction}

Our aim in this article is to give a brief introduction to Hall algebras, and explain how they can be used to study motivic invariants of moduli spaces of coherent sheaves on Calabi-Yau threefolds. In particular, we discuss  generalized Donaldson-Thomas (DT) invariants, and  the Kontsevich-Soibelman wall-crossing formula, which describes their behaviour   under variations of stability parameters. Many long and  difficult papers have been written on these topics:  here we focus on the most basic aspects of the story, and give pointers to the literature.

We begin our introduction to Hall algebras in Section 2. 
In this introductory section we will try to motivate the reader by discussing some of the more concrete applications. The theory we shall describe applies quite generally to motivic invariants of moduli spaces of sheaves on Calabi-Yau threefolds, but some of the most striking results relate to curve-counting invariants, and for the sake of definiteness we will  focus on these.
%
\subsection{Motivic invariants}
Since the word motivic has rather intimidating connotations in general,  let us make clear from the start that in this context it simply refers to invariants of varieties which have the property that
\[\chi(X)=\chi(Y)+\chi(U),\]
whenever $Y\subset X$ is a closed subvariety and $U=X\setminus Y$.
A good example is the Euler characteristic: if $X$ is a variety over $\C$ we can define
\[e(X)=\sum_{i\in \Z} (-1)^i \dim_\C H^i(X^{\rm an},\C)\in \Z,\]
where the cohomology groups are the usual singular cohomology groups of  $X$  equipped with the analytic  topology.

Of crucial importance for the theory we shall describe is Behrend's  discovery \cite{behrend} of the motivic nature of DT invariants. 
If $M$ is a fine projective moduli scheme parameterizing stable coherent sheaves on a Calabi-Yau threefold $X$, there is a corresponding DT invariant \cite{thomas}
\[\DT(\M)=\int_{M^{\rm vir}} 1\in \Z,\] defined to be the degree of the virtual fundamental class of $M$. Behrend proved  that this invariant can also be computed as a weighted Euler characteristic
\[\DT(\M)=e(M;\nu):=\sum_{n\in \Z} n\cdot e(\nu^{-1}(n))\in \Z,\]
where $\nu\colon M\to \Z$  is a certain constructible function,
 depending only on the singularities of the scheme $M$.

Surprisingly, it turns out that for most of the applications described below one can equally well consider naive DT (or `Euler-Thomas') invariants
\[\ET(\M)=e(M)\in \Z,\]
 and the reader unfamiliar with virtual fundamental classes and the Behrend function will not miss anything by restricting to this case. Nonetheless, the genuine invariants are more important for several reasons:  they are unchanged by deformations of $X$, they have subtle integrality properties, and they are directly relevant to physics.

\subsection{Example: Toda's flop formula}


Let $X$ be a  smooth projective Calabi-Yau threefold over $\C$. We always take this to include the condition that \[H^1(X,\O_X)=0.\]
Fix  $\beta\in H_2(X,\Z)$ and $n\in\Z$ and consider the Hilbert scheme
\[\Hilb(\beta,n)=\bigg\{\parbox{17em}{\centering closed subschemes $C\subset X$ of dim $\leq 1$ \\ \smallskip satisfying $[C]=\beta$ and $\chi(\OO_C)=n$}\bigg\}.\]
This can be viewed as a fine moduli space for rank one torsion-free sheaves on $X$ by mapping $C\subset X$ to its ideal sheaf $\I_C$ (at the level of $\C$-valued points this identification is easy, see e.g. \cite[Lemma 2.2]{bridgeland_maciocia}, and for the motivic  statements here this suffices; the full scheme-theoretic isomorphism is covered in \cite[Section 2]{pandharipande_thomas1}).  We can  consider the corresponding naive DT  invariants
\[\ET(\beta,n)=e(\Hilb(\beta,n))\in \Z,\]
or by introducing the Behrend function, their more genuine cousins.

%

 Let us now consider  two smooth projective Calabi-Yau threefolds $X_\pm$ related by a flop:
\[\xymatrix@C=1.4em{ X_+\ar[dr]_{f_+}&&
X_-\ar[dl]^{f_-} \\
&Y }\]
It seems very natural to ask how the DT invariants are affected by this birational transformation. 

\begin{theorem}[Toda, \cite{toda6}]
\label{toda1}
The expression
\[{\frac{\sum_{(\beta,n)} \ET(\beta,n)\,  x^\beta y^n}{\sum_{(\beta,n):
f_*(\beta)=0} \ET(\beta,n) \, x^\beta y^n}}\] is the same on both
sides of the flop, after making the natural identification
\[H_2(X_+,\Z) \isom H_2(X_-,\Z)\]
induced by strict transform of divisors.
 \end{theorem}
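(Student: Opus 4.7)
The plan is to derive the identity from a factorisation in a motivic Hall algebra and then apply an integration map. The Calabi--Yau condition will be essential, because it makes the target of the integration map commutative, so convolution products of Hall algebra elements translate into ordinary products of generating series.

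The starting ingredient is Bridgeland's derived equivalence
\[\Phi \colon D^b(X_+) \lRa{\sim} D^b(X_-)\]
for a projective threefold flop. It is $Y$-linear and interchanges the standard heart $\Coh(X_\pm)$ with a perverse heart $\Per(X_\mp/Y) \subset D^b(X_\mp)$ of Bridgeland-type perverse coherent sheaves. Under $\Phi$ the kernel of $(f_+)_*$ on numerical classes is matched with the kernel of $(f_-)_*$, and on complementary classes the induced action is precisely the strict-transform identification $H_2(X_+,\Z) \cong H_2(X_-,\Z)$ appearing in the statement.

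Next, I would express each numerator series as the image, under a motivic integration map, of the characteristic stack function of ideal sheaves in the Hall algebra of $\Coh(X_\pm)$. A $\C$-point of $\Hilb(\beta,n)$ is the same as a short exact sequence $0 \to \I_C \to \O_{X_+} \to \O_C \to 0$ in $\Coh(X_+)$. Reading this sequence in $\Per(X_-/Y)$ via $\Phi$ exhibits $\Phi(\I_C)$ as a perverse object whose "standard" part is an ideal sheaf on $X_-$ and whose "perverse defect" is a perverse sheaf with class in $\ker(f_-)_*$. This yields a Hall algebra identity of schematic form
\[[\,\text{ideal sheaves on }X_+\,] \;=\; [\,\text{"mixed" ideals}\,] \,*\, [\,\text{exceptional part on }X_-\,],\]
and symmetrically on the other side, with the middle factor matched across the flop by $\Phi$. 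Passing through the integration map, commutativity (the Calabi--Yau input) turns convolution into ordinary multiplication, and the second factor is exactly the denominator $\sum_{f_*\beta=0}\ET(\beta,n)\,x^\beta y^n$. Dividing numerator by denominator on each side then produces the same generating series, which is the claim.

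The main obstacle is making the Hall algebra factorisation rigorous. One must verify that the moduli stack of perverse coherent sheaves is algebraic and locally of finite type (equivalently, that $\Per(X/Y)$ is the heart of a bounded $t$-structure with the right noetherian and artinian properties), that the locus of perverse ideal sheaves and the locus of perverse sheaves supported on exceptional fibres give rise to well-defined elements of the motivic Hall algebra, and that the $\Ext$-vanishing required to regard $\Phi(\I_C)$ as a genuine extension in $\Per(X_-/Y)$ holds uniformly in families. The integration map itself has to be set up on a suitable completion of the Hall algebra graded by Chern character, since the stacks at hand are only locally of finite type before fixing numerical invariants; carrying this through with enough care to make the series manipulations meaningful is the most delicate part of the argument.
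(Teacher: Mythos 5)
Your plan is essentially the one sketched in Section 5.4 of the paper (and carried out rigorously by Calabrese \cite{calabrese1}, following \cite{bridgeland_curve-counting}): tilt $\Coh(X_\pm)$ at the torsion pairs $(\TT_\pm,\FF_\pm)$, use the flop equivalence of \cite{bridgeland_flop,vdb} to match the perverse hearts, write quotient and torsion-pair identities in the motivic Hall algebra, and push them through an integration map before dividing. One correction on a point you stress: the commutativity that lets you trade convolution for ordinary multiplication is \emph{not} a consequence of the Calabi--Yau condition (CY$_3$ only makes the Euler form skew-symmetric); it holds because the relevant classes come from sheaves supported in dimension $\leq 1$ on a threefold, for which the Euler pairing vanishes identically. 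The CY$_3$ hypothesis enters elsewhere --- it is what makes the (Poisson) integration map \eqref{saz} a homomorphism in the first place --- and note also that Toda's original proof cited in the theorem actually proceeded via Joyce's combinatorial wall-crossing formula rather than the tilting factorisation you and the paper's sketch both use.
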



The result was extended to genuine DT invariants using a different argument by Calabrese \cite{calabrese1}, and Toda's argument now also applies to this case \cite{toda7}. In the case when the flopped curves have normal bundle $\O(-1)^{\oplus 2}$ the result was  proved earlier by Hu and Li \cite{huli} using different techniques.

\subsection{Example: the DT/PT correspondence}
\label{poop}
Pandharipande and Thomas \cite{pandharipande_thomas1} introduced an `improved' version of the moduli space $\Hilb(\beta,n)$ which eliminates  the problem of free-roaming points.
 A stable pair on $X$ is a map
\[f\colon \OO_X \to
E\] of coherent sheaves
 such that
 \[\text{(a) $E$ is pure of dimension 1,}\qquad \text{(b)} \dim \operatorname{supp} \coker(f)=0.\]
%

 Fixing 
a class $\beta\in H_2(X,\Z)$ and $n\in \Z$ as before, there is a fine moduli scheme $\PHilb(\beta,n)$ parameterizing stable pairs with $\ch(E)=(0,0,\beta,n)$.
We  can then consider naive stable pair invariants \[\EP(\beta,n)=e(\PHilb(\beta,n))\in \Z.\]
Genuine stable pair invariants  are obtained by weighting with the Behrend function as before.


\begin{theorem}[Toda, \cite{toda2}]
\label{toda2}
\begin{itemize}
\smallskip

\item[(i)]For each $\beta\in H_2(X,\Z)$ there is an identity
\[{\sum_{n\in \Z} \EP(\beta,n) y^n = \frac{\sum_{n\in \Z} 
\ET(\beta,n)y^n}{\sum_{n\geq 0}\ET(0,n) y^n}}.\]
\item[(ii)]
This formal power series is the  Laurent expansion of a rational function of $y$, invariant under $y\leftrightarrow y^{-1}$.
\end{itemize}
\end{theorem}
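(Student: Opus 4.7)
I would prove (i) from a single identity in the Hall algebra of a suitable heart in $D^b(\Coh(X))$, and deduce (ii) by combining a derived duality on stable pairs with a Hall-algebra argument that decomposes the generating series into rational factors.

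The first step is to set up the ambient category. Let $\A \subset D^b(\Coh(X))$ be the extension-closure of $\OO_X[1]$ and the subcategory $\Coh_{\leq 1}(X)$ of sheaves with support of dimension $\leq 1$. Objects of $\A$ whose Grothendieck class has the form $-[\OO_X] + [F]$, with $F\in \Coh_{\leq 1}(X)$, are (quasi-isomorphic to) two-term complexes $\OO_X \to E$; ideal sheaves appear as $I_C[1]$, stable pairs of Section~\ref{poop} appear directly, and $\Coh_0(X)$ sits inside $\A$ as a torsion subcategory. The key observation is that every such two-term object of class $(\beta,n)$ admits a canonical short exact sequence in $\A$ relating $I_C[1]$, a stable pair $P$, and a zero-dimensional sheaf $T\in \Coh_0(X)$; this encodes the operation of purifying $\OO_C$ by killing its $0$-dimensional torsion.

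Assembling this construction uniformly over moduli yields an identity in the completed Hall algebra $\hHall(\A)$ of the schematic form
\[[\,\text{shifted ideal sheaves}\,] \;=\; [\,\text{stable pairs}\,]\ast[\,\text{$0$-dim.\ sheaves}\,],\]
where $\ast$ is Hall multiplication. I would then apply an integration map from $\hHall(\A)$ to a commutative ring of generating series in $y$ — valid because the Euler form of $\A$ is antisymmetric on pairs of rank-zero objects, a consequence of the Calabi--Yau hypothesis together with $H^1(\OO_X)=0$ — converting the identity into the generating-function formula of~(i), with each factor identified with the Euler characteristic of the corresponding moduli scheme.

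For (ii), the symmetry $y\leftrightarrow y^{-1}$ would come from the derived dualizing functor $E\mapsto R\Hom(E,\OO_X)[2]$. Serre duality on $X$, together with $H^i(\OO_X)=0$ for $i=1,2$, shows that this functor sends a stable pair of type $(\beta,n)$ to one of type $(\beta,-n)$, inducing an isomorphism $\PHilb(\beta,n)\isom \PHilb(\beta,-n)$ and hence $\EP(\beta,n)=\EP(\beta,-n)$. Rationality I would obtain from a further wall-crossing argument in $\hHall(\A)$, interpolating between the stable-pair chamber and its image under duality so that the generating function is expressed as a finite product of manifestly rational factors indexed by the BPS-type contributions in each numerical class.

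\textbf{Main obstacle.} The principal difficulty lies in the Hall-algebra formalism: constructing $\A$ as a genuine heart, verifying the uniqueness of the torsion-pair decomposition, controlling the infinite sums in the completion $\hHall(\A)$, and proving that the integration map is a ring homomorphism — straightforward for naive invariants via the motivic multiplicativity of Euler characteristic, but significantly more subtle for genuine DT invariants where one must invoke Behrend's identity and Joyce--Song integration. A secondary obstacle is the rationality argument in (ii): the symmetry alone is not enough, because $\sum_n \EP(\beta,n)\, y^n$ has support on an infinite range of $n$, so one really needs the explicit multiplicative decomposition of the series, whose combinatorial control is the key technical hurdle.
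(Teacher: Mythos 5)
Your overall framework is closer in spirit to Toda's original argument (working in a heart of $D^b(\Coh X)$ that simultaneously contains shifted ideal sheaves, stable pairs, and $\Coh_0(X)$, then using the duality $R\Hom(-,\OO_X)[2]$ for the $y\leftrightarrow y^{-1}$ symmetry) than to the proof sketched in the paper, which stays inside $\Coh(X)$ and its tilt $\A^\sharp$ at the torsion pair $(\Coh_0,\,$\text{pure}$)$ and never needs to build the auxiliary heart at all. That is a legitimate route, and your treatment of part (ii) is essentially sound in outline. But the core of your proof of part (i) contains a genuine error.

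The claimed Hall-algebra identity
\[
[\,\text{shifted ideal sheaves}\,] \;=\; [\,\text{stable pairs}\,] * [\,\text{$0$-dim.\ sheaves}\,]
\]
is simply false as an identity in $\hHall(\A)$, whichever order you take. It is true that every shifted ideal sheaf $I_C[1]$ admits a canonical short exact sequence
\[
0 \lra T \lra I_C[1] \lra P \lra 0
\]
in your heart $\A$ (with $T\in\Coh_0$ the torsion of $\OO_C$ and $P$ the associated stable-pair complex), but the converse fails: an arbitrary extension of a stable pair $P$ by a $0$-dimensional sheaf $T$ need not be a shifted ideal sheaf. Indeed, the long exact sequence of standard cohomology gives $H^0_{\Coh}(B)=\coker(J\to T)$, where $J\to T$ is the connecting map of the extension; when the extension splits (or more generally whenever the connecting map is not surjective) this cokernel is nonzero and $B$ cannot be $I_{C'}[1]$. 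So the Hall product on the right is supported on a strictly larger locus than the left-hand side, and the two elements are different.

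This is exactly the subtlety that the paper's argument (following \cite{bridgeland_curve-counting}) is designed to handle. There the Hall-algebra identity that one actually proves is
\[
\Quot^\O_\A * \delta_\TT \;=\; \Quot^\O_\TT * \delta_\TT * \Quot^\O_{\A^\sharp},
\]
with the extra factor $\delta_\TT$ on both sides; those factors only cancel \emph{after} applying the integration map, because its target is commutative and $\I(\delta_\TT)$ is invertible there. In Toda's proof the analogous role is played by a genuine wall-crossing chain of weak stability conditions interpolating between the DT and PT chambers, with the $0$-dimensional contributions emerging from the wall-crossing formula rather than from a single torsion-pair decomposition. Either way, there is no literal factorization at the Hall-algebra level, and your step (i) cannot be repaired simply by choosing the right moduli stacks on each side: you need to insert the compensating $\delta_\TT$ (or the full sequence of wall-crossings) and only remove it after integration. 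Once that is fixed, the rest of your plan — the integration map being a ring homomorphism because the Euler form is trivial on $\Coh_{\leq 1}$, the duality argument for the symmetry, and wall-crossing for rationality — is on the right track, though the rationality step as you state it still needs the precise finite product over numerical classes spelled out.
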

\smallskip

These results have since been shown to hold for genuine invariants \cite{bridgeland_curve-counting, toda7}. Part (i) had previously been conjectured by Pandharipande and Thomas \cite[Sect. 3]{pandharipande_thomas1}; part (ii) then becomes 
part of the famous MNOP conjectures \cite[Conj. 2]{mnop}. See also \cite{stoppa_thomas} for a generalization of Theorem \ref{toda2} to arbitrary threefolds. 

\subsection{General strategy}
The basic method for proving the above results (and many more like them) is taken from Reineke's work on 
the cohomology groups of moduli spaces of quiver representations \cite{reineke2}. One can thus view the whole subject as a showcase for the way in which techniques pioneered in the world of representations of quivers can solve important problems in algebraic geometry.  
The strategy consists of three steps:

\begin{itemize}

\item[(a)] Describe the relevant phenomenon in terms of wall-crossing: a change of stability condition  in an abelian or triangulated category $\CC$. \smallskip

\item[(b)] Write down an appropriate identity in the Hall algebra of  $\CC$.\smallskip

\item[(c)] Apply a ring homomorphism $\I\colon \Hall(\CC) \to \C_q[K_0(\CC)]$
to obtain the required identity of generating functions.

\end{itemize}

 The first two steps are completely general, but the existence of the map $\I$ (known as the integration map) requires either
 \smallskip
 
\begin{itemize}
\item[(i)] $\CC$ is hereditary: 
$\Ext_{\CC}^{i}(M,N)=0$ for $i>1$,
\smallskip

\item[(ii)] $\CC$ satisfies the CY$_3$ condition:
$\Ext_{\CC}^i(M,N)\isom \Ext_{\CC}^{3-i}(N,M)^*$.
\end{itemize}

Hall algebras and an example of a Hall algebra identity will be introduced in Section 2. Integration maps are discussed in Section 3. The most basic wall-crossing identity, resulting from the existence and uniqueness of Harder-Narasimhan filtrations, will be discussed in Section 4. The application of the above general strategy to Theorems \ref{toda1} and \ref{toda2}  will be explained in Section 5.

\subsection
{Some history}

It is worth noting the following  pieces of pre-history which provided essential ideas for the results described here.

\begin{itemize}

\item[(1)]Computation of the Betti numbers of moduli spaces of semistable bundles on curves  using the Harder-Narasimhan stratification   (Harder-Narasimhan \cite {harder_narasimhan}, Atiyah-Bott \cite{atiyah_bott}).\smallskip

\item[(2)]Wall-crossing behavior of  moduli spaces with parameters, e.g. work of Thaddeus \cite{thaddeus}  on moduli of stable pairs on curves.\smallskip

\item[(3)]Use of derived categories and changes of t-structure to increase the flexibility of wall-crossing techniques, e.g. threefold flops \cite{bridgeland_flop}.\smallskip

\item[(4)]Systematic use of Hall algebras: Reineke's calculation of Betti numbers of moduli spaces of representations of quivers \cite{reineke2}.\smallskip

\item[(5)] Behrend's interpretation of Donaldson-Thomas invariants as weighted Euler characteristics \cite{behrend}.

\end{itemize}

The credit for the development of motivic Hall algebras as a tool for studying moduli spaces of sheaves on Calabi-Yau threefolds is due jointly to Joyce and to Kontsevich and Soibelman. 
Joyce introduced motivic Hall algebras in a long series of papers \cite{joyce1,joyce2,joyce4,joyce6,joyce5,joyce7}. He used this framework to define generalizations of the naive Donaldson-Thomas invariants considered above, which apply to moduli stacks containing strictly semistable sheaves. He also  worked out  the wall-crossing formula for these invariants and proved a very deep no-poles theorem. Kontsevich and Soibelman \cite{kontsevich_soibelman1} constructed an alternative theory which incorporates  motivic vanishing cycles, and therefore applies to genuine DT invaraints and motivic versions thereof. They also produced a more  conceptual statement of the wall-crossing formula. Some of their work was conjectural and is still being developed today. Joyce and Song \cite{joyce_song} later  showed how to directly incorporate the Behrend function into Joyce's  framework, and so obtain rigorous results on DT invariants.

\smallskip

{\bf Notes.} There are quite a few survey articles on the topics covered here. For a survey of curve-counting invariants we recommend \cite{pandharipande_thomas_survey}. Joyce \cite{joyce8} and Kontsevich-Soibelman \cite{kontsevich_soibelman_survey} produced surveys of their work in this area. Toda \cite{toda_survey} also wrote a survey of wall-crossing techniques in DT theory.

%
%
%
%




\section{Hall algebras}

The aim of this section is to introduce the  idea of a  Hall algebra in general, and introduce the particular kind `motivic Hall algebras' which will be important for our applications to moduli spaces. As a warm-up we  begin by discussing finitary Hall algebras. From our point-of-view these are rather simplified models, but one of the important features of this subject is that `back-of-the-envelope' calculations can be easily  made in the finitary case before being generalized to the more realistic motivic setting.

\subsection{Finitary Hall algebras}
\label{start}

Suppose that $\A$ is an essentially small abelian category satisfying the following strong finiteness conditions:

\smallskip

\begin{itemize}
\item[(i)] Every object has only finitely many subobjects.\smallskip

\item[(ii)] All  groups $\Ext^i_\A(E,F)$ are finite.
\end{itemize}

Of course these conditions are never satisfied for categories of coherent sheaves but there are nonetheless plenty of examples: let $A$ be any finite dimensional  algebra  over  a finite field $k=\mathbb{F}_q$, and take  $\A=\mod(A)$ 
to be the  category of finite dimensional left $A$--modules.


\begin{defn}
The finitary Hall algebra of $\A$ is defined to be the set of all complex-valued functions on isomorphism classes of $\A$
\[\hHall_{\fin}(\A)=\big\{f\colon (\Obj(\A)/{\scriptstyle \isom})  \lra \C\big\},\]
 equipped with a convolution product coming from short exact sequences:
\[(f_1*f_2)(B)=\sum_{A\subset B} f_1(A) \cdot f_2(B/A).\]
This is an associative, but usually non-commutative, unital algebra. We also define a subalgebra \begin{equation}
\label{multiple}\Hall_\fin(\A)\subset \hHall_\fin(\A),\end{equation} consisting of functions with finite support.
\end{defn}

Before going further the reader should prove that the Hall product indeed gives an associative multiplication,  and that  multiple products are given by the formula\[(f_1*\cdots *f_n)(M)=\sum_{0=M_0\subset M_1\subset \cdots \subset M_n= M} f_1(M_1/M_0) \cdots f_n(M_n/M_{n-1}).\]
Finally one should check that the the characteristic function $\delta_0$ of the zero object is the multiplicative unit. 

For each object $E\in \A$ we consider an element $\delta_E\in \Hall_\fin(\A)$ which is the characteristic function of the isomorphism class of $E$, and the closely related element
\[\kappa_E=|\Aut(E)|\cdot \delta_E\in \Hall_\fin(\A).\]
The following Lemma was first proved by Riedtmann.

\begin{lemma}
\label{ried}For any objects $A,C\in \A$ we have an identity

  \[\kappa_A* \kappa_C= \sum_{B\in \A} \frac{|\Ext^1(C,A)_B|}{|\Hom(C,A)|}\cdot \kappa_B,\]
where 
$\Ext^1(C,A)_B\subset \Ext^1(C,A)$
denotes the subset of extensions whose  middle term is isomorphic to $B$. 
\end{lemma}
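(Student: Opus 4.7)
The plan is to count the finite set $S$ of pairs $(\iota,\pi)$ making $0 \to A \xrightarrow{\iota} B \xrightarrow{\pi} C \to 0$ exact (with $A$, $B$, $C$ as in the statement) in two different ways: once via the subobject description of the Hall product, and once via the extension class of the sequence. Equating the two counts and summing over isomorphism classes $B$ will produce the identity.

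For the first count, every sequence in $S$ determines the subobject $\im(\iota) \subset B$, which is isomorphic to $A$ with quotient isomorphic to $C$; conversely, any subobject $A' \subset B$ with $A' \isom A$ and $B/A' \isom C$ lifts to an element of $S$ by choosing isomorphisms $A \isom A'$ and $B/A' \isom C$, giving exactly $|\Aut(A)|\cdot |\Aut(C)|$ lifts. Since by definition $(\delta_A * \delta_C)(B)$ counts precisely such subobjects, this yields
\[|S| \;=\; (\delta_A * \delta_C)(B) \cdot |\Aut(A)| \cdot |\Aut(C)|.\]

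For the second count, let $\Aut(B)$ act on $S$ by $\phi \cdot (\iota,\pi) = (\phi\iota,\,\pi\phi^{-1})$; by the standard definition of equivalence of extensions, the orbits are in bijection with $\Ext^1(C,A)_B$. The main technical step is to identify the stabilizer of a point $(\iota,\pi)$ with the abelian group $\Hom(C,A)$: from $\phi\iota = \iota$ and $\pi\phi = \pi$ one deduces that $\phi - \id$ is killed on the right by $\iota$ and on the left by $\pi$, hence factors uniquely as $\iota\alpha\pi$ for some $\alpha \in \Hom(C,A)$ (using in turn that $\pi$ is epi and $\iota$ is monic). Conversely, for any such $\alpha$ the endomorphism $\phi = \id + \iota\alpha\pi$ fixes the sequence and is automatically invertible, since $(\iota\alpha\pi)^2 = 0$ because of the composition $\pi\iota = 0$ in its middle. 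The orbit-stabilizer theorem then gives
\[|S| \;=\; |\Ext^1(C,A)_B| \cdot \frac{|\Aut(B)|}{|\Hom(C,A)|}.\]
Equating the two expressions for $|S|$, multiplying by $|\Aut(B)| / (|\Aut(A)|\cdot |\Aut(C)|)$ and summing over isomorphism classes $B$, and finally absorbing $|\Aut(B)|\delta_B = \kappa_B$ on the right, delivers the desired formula for $\kappa_A * \kappa_C$.

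The only substantive obstacle is the stabilizer computation — both the uniqueness of the factorization $\phi - \id = \iota\alpha\pi$ and the verification that every such element is genuinely an automorphism. Once this piece of diagram chasing is in place, the remainder of the proof is straightforward double-counting.
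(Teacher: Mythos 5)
Your proof is correct. The paper itself does not give a proof of this lemma---it defers to Schiffmann's lecture notes (Lemma~1.2 there), where essentially the same double-counting argument appears: count triples $(\iota,\pi)$ once by grouping according to the subobject $\im(\iota)\subset B$ (yielding $(\delta_A*\delta_C)(B)\cdot|\Aut(A)|\cdot|\Aut(C)|$) and once by the $\Aut(B)$-action whose orbits are extension classes and whose stabilizer is $\Hom(C,A)$ via $\phi=\id+\iota\alpha\pi$. Your identification of the stabilizer is handled cleanly, including the point that $(\iota\alpha\pi)^2=0$ forces invertibility, so there is no gap.
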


\begin{proof}
This is another very good exercise. See   \cite[Lemma 1.2]{schiffmann_notes}.
\end{proof}

One more piece of notation: we define an element $\delta_\A\in \hHall_\fin(\A)$ by setting
\[\delta_\A(E)=1\quad \text{ for all } E\in \A.\]
This should not be confused with the identity element $1=\delta_0\in \hHall_\fin(\A)$.

\subsection{Example: category of vector spaces}
\label{vect}

Let $\A=\operatorname{Vect_k}$ be the category of  finite dimensional  vector spaces over $\F_q$.
Let
\[\delta_n\in \Hall_\fin(\A)\]
denote the characteristic function of vector spaces of dimension $n$. The definition immediately gives
\[\delta_n * \delta_m= |\Gr_{n,n+m}(\F_q)| \cdot \delta_{n+m}.\]
The number of $\F_q$-valued points of the Grassmannian appearing here is easily computed: it is the $q$-binomial coefficient
\[|\Gr_{n,n+m}(\F_q)|= \frac{(q^{n+m}-1) \cdots (q^{m+1}-1)}{(q^n -1)\cdots (q-1) } = \binom{n+m}{n}_q. \]
It then follows that there is an isomorphism of algebras
 \[\I\colon \Hall_\fin(\A) \to \C[x], \qquad
\I(\delta_n)= \frac{q^{n/2}\cdot x^n}{(q^n-1)\cdots (q-1)},\]
where the factor $q^{n/2}$ is inserted for later convenience.
This is in fact a first example of an integration map: in this special case it is an isomorphism, because the isomorphism class of an object of $\A$ is completely determined by its numerical invariant $n\in \Z_{\geq 0}$.

The isomorphism $\I$ maps  the element $\delta_\A=\sum_{n\geq 0}\delta_n$  to the series 
\[\E_q(x)=\sum_{n\geq 0} \frac{q^{n/2} \cdot x^n}{(q^n-1) \cdots (q-1)}\in \C[[x]].\]
This series is known as the quantum dilogarithm \cite{fock_goncharov,kashaev,keller}, because if we view $q$ as a variable, then 
\[\log \E_q(x) = \frac{-1}{(q-1)}\cdot \sum_{n\geq 1} \frac{x^n}{n^2} + O(1),\]
as $q^{1/2}\to -1$ in the region  $|q|<1$. 
This identity will be very important later: it gives rise to the multiple cover formula in Donaldson-Thomas theory.

\subsection{Quotient identity}
\label{id}

The beauty of the Hall algebra construction  is the way that it allows one to turn categorical statements into algebraic identities. As we shall see in Sections \ref{stab} and \ref{wc} (which can also be read now),  this is the basis for the Kontsevich-Soibelman wall-crossing formula. Here we give a different  example, which is the basis of our approach to  Theorems \ref{toda1} and \ref{toda2}.

Let $\A$ be an abelian category satisfying the finiteness assumptions as above, and let us also fix an object $P\in \A$. Introduce  elements  \[\delta_\A^P\in \hHall_\fin(\A), \quad  \Quot^P_\A\in \hHall_\fin(\A),\]
by defining, for any object $E\in \A$,
\[\delta^P_\A(E)=|\Hom_\A(P,E)|, \quad \Quot^P_\A(E)=|\Hom_\A^{\onto}(P,E)|,\]
where $\Hom^\onto_\A(P,E)\subset \Hom_\A(P,E)$ is the subset of surjective maps.
The following is a variant of \cite[Lemma 5.1]{engel_reineke}.

\begin{lemma}
\label{idid}
There is an identity
\[{\delta_\A^P = \Quot^P_\A * \,\delta_\A}\]
in the Hall algebra $\hHall_\fin(\A)$.
\end{lemma}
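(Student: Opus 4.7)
The plan is to unwind the definition of the Hall product on both sides and reduce to a purely set-theoretic identity about morphisms with prescribed image. On the right-hand side, applying the multiplication formula gives
\[
(\Quot^P_\A * \delta_\A)(E) = \sum_{A \subset E} |\Hom^\onto_\A(P,A)| \cdot \delta_\A(E/A) = \sum_{A \subset E} |\Hom^\onto_\A(P,A)|,
\]
where the sum runs over subobjects $A$ of $E$ (which is a finite sum by the finiteness assumption on $\A$) and we have used that $\delta_\A$ is identically $1$. So the claim reduces to proving
\[
|\Hom_\A(P,E)| = \sum_{A \subset E} |\Hom^\onto_\A(P,A)|.
\]

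The strategy for this equality is the standard image-factorisation trick. In an abelian category every morphism $f\colon P \to E$ factors uniquely as an epimorphism $P \onto \im(f)$ followed by a monomorphism $\im(f) \hookrightarrow E$. Thus we obtain a map from $\Hom_\A(P,E)$ to the set of pairs $(A, g)$, where $A \subset E$ is a subobject and $g\colon P \onto A$ is a surjection, sending $f$ to $(\im(f), f|^{\im(f)})$. The inverse sends a pair $(A,g)$ to the composite $P \onto A \hookrightarrow E$. This bijection stratifies $\Hom_\A(P,E)$ by the subobject $\im(f) \subset E$, giving exactly the desired identity.

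Two small points I would verify carefully: first, that the sum over subobjects in the Hall product really coincides with summing over all $A \subset E$ (and not over some other indexing set — this is immediate from the definition in the excerpt, where subobjects, not isomorphism classes of subobjects, are summed); second, that the image-factorisation is genuinely unique up to equality in $\A$ (not just up to isomorphism), so that the bijection above is an equality of finite sets rather than a bijection up to automorphism. Neither of these is a serious obstacle — they are basic facts about abelian categories — so I expect the only substantive step is the image-factorisation bijection itself, and the proof is essentially a one-line calculation once the formula for the Hall product has been expanded.
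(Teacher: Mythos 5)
Your proof is correct and matches the paper's argument: expand the Hall product on $E$, then invoke the unique image-factorisation of any map $P\to E$ to identify $|\Hom_\A(P,E)|$ with $\sum_{A\subset E}|\Hom^\onto_\A(P,A)|$. The paper states this more tersely but it is exactly the same computation.
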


\begin{proof}
Evaluating on an object $E\in \A$ gives
\[|\Hom_\A(P,E)|=\sum_{A\subset E} |\Hom_\A^\onto(P,A)|\cdot 1,\]
which holds because every map $f\colon P\to E$ factors uniquely via its image.
\end{proof}

It is a  fun exercise to apply this result in the case when $\A=\operatorname{Vect_k}$ and $P=k^{\oplus d}$, to obtain an identity involving the quantum dilogarithm $\E_q(x)$.

\subsection{Hall algebras in general}
\label{general}

A given abelian  category $\A$ may have many different flavours of Hall algebra
associated to it: finitary Hall algebras, Hall  algebras of constructible functions, motivic Hall algebras, cohomological Hall algebras, etc.
In this  section we shall make some general (and intentionally vague) remarks relevant to any of these: our point-of-view is that the different types of Hall algebra should be thought of as different ways to take the `cohomology' of the moduli stack of objects of $\A$. 

For definiteness  we take $\A$ to be the category of coherent sheaves  on a smooth projective variety $X$. 
Consider the stack $\M$  of objects of $\A$, and the stack $\M^{(2)}$  of short exact sequences in $\A$. There is a diagram of morphisms of stacks
\begin{equation}
\label{diagram}\begin{CD} \M\times \M @<(a,c)<< \M^{(2)} @>b>> \M\end{CD}\end{equation}
where the morphisms $a,b,c$ take a short exact sequence in $\A$ to its constituent objects, as in the following diagram.
\[\xymatrix@C=.5em{ &0\to A\to B\to C\to 0\ar[dr]_b\ar[dl]^{(a,c)} \\
(A,C)
&&B }\]
It is fairly easy to see that the morphism $(a,c)$ is of finite type, but not representable, whereas $b$ is representable but only locally of finite type. Moreover

\begin{itemize}
\item[(i)] The fibre of $(a,c)$ over $(A,C)\in \M\times \M$ is the quotient stack
\[\big[ \Ext^1_X(C,A)/\Hom_X(C,A)\big].\]
\item[(ii)] The fibre of $b$ over $B\in \M$ is the Quot scheme $\Quot_X(B)$.
\end{itemize}

The idea now is to apply a suitable `cohomology theory' to  our stacks and use the  correspondence \eqref{diagram} to obtain a product operation
\[m\colon H^*(\M)\otimes H^*(\M)\lra H^*(\M).\]
The crucial associativity property  follows from the existence of  certain Cartesian squares involving stacks of 
two-step filtrations. See \cite[Section 4]{bridgeland_intro} for an explanation of this.

By a `cohomology theory' here, we simply mean a rule that assigns a vector space to each stack in such a way that 
\begin{itemize}
\item[(a)] For every morphism of stacks $f\colon X\to Y$, there should exist functorial maps
\[ f^*\colon H^*(Y)\to H^*(X), \quad f_*\colon H^*(X)\to H^*(Y),\]
when $f$ is of finite type or representable respectively,
and satisfying base-change around all suitable 2-Cartesian squares.
\smallskip
\item[(b)] Given two stacks $X$ and $Y$, there should exist functorial K{\"u}nneth maps \[ H^*(X)\tensor H^*(Y) \to H^*(X\times Y).\]
\end{itemize}

We shall see examples of such `cohomology theories' below. Note that the maps in the diagram \eqref{diagram} will not usually be smooth, which makes applying familiar cohomology theories such as  singular cohomology problematic. It seems likely that hidden smoothness results in  derived algebraic geometry will be important in future developments.

\subsection{Grothendieck groups}
The Grothendieck group  $K({\Var}/\C)$ is defined to be 
the free abelian group on the set of isomorphism classes of complex varieties, modulo the scissor relations
\[[X]\sim [Y]+[U],\]
whenever $Y\subset X$ is a closed subvariety and $U=X\setminus Y$. Cartesian product of varieties gives $K(\Var/\C)$ the structure of a commutative ring:\[[X]\cdot [Y]=[X\times Y].\] 

One can of course define Grothendieck rings of complex schemes in the same way. However if one allows arbitrary schemes over $\C$,  an Eilenberg swindle argument using the decomposition
\[\Z\times\Spec(\C)\isom \big(\Z\times\Spec(\C)\big)\bigsqcup \Spec\C\]
will force the ring to be trivial.
 On the other hand, if  one restricts to schemes of finite type over $\C$, the result will be isomorphic to $K(\Var/\C)$, because  any such scheme has a stratification by varieties. 

One can similarly consider relative Grothendieck groups of schemes. Thus given a base scheme $S$ over $\C$ we define $K(\Var/S)$ to be the free abelian group on the set of isomorphism classes of $S$-schemes $f\colon X\to S$, where $X$ is assumed to be of finite type over $\C$,  modulo relations
\[[X\lRa{f}S]\sim[Y\lRa{f|_Y} S]+[U\lRa{f|_U} S],\]
for $Y\subset X$  a closed subscheme  and $U=X\setminus Y$.
Fibre product over $S$ gives a ring structure as before.
Given a map of schemes $\phi\colon S\to T$ there is a group homomorphism
\[\phi_*\colon K(\Var/S) \to K(\Var/T), \quad 
[f\colon X\to S] \mapsto [\phi\circ f\colon X\to T].\]
If the map $\phi$ is of finite type we also get a ring homomorphism
\[\phi^*\colon K(\Var/T) \to K(\Var/S),\quad [g\colon Y\to T]\mapsto [g\times_T S\colon Y\times_T S\to S].\]
There is an obvious K{\"u}nneth type   map
\[[f\colon X\to S]\tensor [g\colon Y\to T] \mapsto [f\times g\colon X\times Y\to S\times T].\]
Together these maps satisfy the basic properties of a `cohomology theory' referred to in Section \ref{general} (although `homology theory' would  perhaps be a more appropriate term in this context).

\subsection{Motivic Hall algebra}

The motivic Hall algebra is defined by taking the `cohomology theory' which assigns to a stack  $S$ the relative Grothendieck ring of stacks over $S$. From now on, all stacks will be assumed to  locally of finite type over $\C$  with affine diagonal.

Given a stack $S$ we define the relative Grothendieck group $K(\St/S)$ to be the free abelian group on the set of isomorphism classes of  $S$-stacks $f\colon X\to S$, where $X$ is assumed to be of finite type over $\C$,   modulo relations
\[[X\lRa{f}S]\sim[Y\lRa{f|_Y} S]+[U\lRa{f|_U} S],\]
for $Y\subset X$  a closed substack  and $U=X\setminus Y$. These relative Grothendieck groups have functorial properties exactly as in the last section.

The motivic Hall algebra is defined to be the relative Grothendieck group
  \[\Hall_{\mot}(\A):=K(\St/\M),\]  with product  defined by the correspondence \eqref{diagram}. Explicitly we have
\[ [Y_1\lRa{f_1}\M] * [Y_2\lRa{f_2} \M] = [Z\lRa{b\circ h}\M], \]
where  $h$ is defined by the Cartesian
square
\begin{equation}
\label{jo}\begin{CD} 
Z & @>h>> &\M^{(2)} &@>b>> \M\\
 @VVV  &&@VV(a,c)V \\
Y_1\times Y_2 &@>f_1\times f_2>> &\M\times\M\end{CD}\end{equation}
Thus, to a first approximation,  an element of the Hall algebra is a family of objects of $\A$ over some base stack $Y$, and the Hall product of two such families is given by taking their universal extension.

One remaining problem is how to define a larger Hall algebra $\hHall_\mot(\A)$  analogous to the algebra $\hHall_{\fin}(\A)$ in the finitary case. This is important because one would like to consider stacks $f\colon \X\to \M$ which are not of finite type, such as the open substack of semistable objects with respect to some stability condition. As explained above, we cannot simply drop the finite type condition since this will lead to the trivial algebra.

The usual solution is rather messy and context-dependent (see e.g. \cite[Sections 5.2--5.3]{bridgeland_curve-counting}) ,  and we do not explain it here: the basic idea is to consider the decomposition $\M=\bigsqcup_{\alpha} \M_\alpha$ according to Chern character, and impose the condition that each $f^{-1}(\M_\alpha)$ is of finite type, together with restrictions on  which of the $f^{-1}(\M_\alpha)$ are allowed to be non-empty.

\subsection{Motivic quotient identity}
\label{motquot}
We now give a rough example of a motivic Hall algebra identity, and explain the sort of reasoning that is required to prove it. 
We take $\A=\Coh(X)$ to be the category of coherent sheaves on a complex projective variety $X$, and look for a version of the identity of Lemma \ref{idid} in the case that $P=\O_X$.

Introduce a stack $\M^\O$ parameterizing sheaves $E\in \Coh(X)$ equipped with a section $ \O_X\to E$. Note that the Hilbert scheme is an open substack
\[\Hilb\subset \M^\O\]
 corresponding to surjective sections.
The analogue of the element $\delta^P$ is  the obvious morphism
$f\colon \M^\O\to \M$
 forgetting the section. The analogue of  $\Quot^P$ is  the induced map $f\colon \Hilb\to \M$. Finally, the analogue of the element $1_\A$ is the identity map $\M\to\M$. 
 
The following result should be taken with a pinch of salt. In particular, we work in an unspecified completion $\hHall_\mot(\A)$.  Rigorous results of a similar kind can be found in \cite[Section 6]{bridgeland_curve-counting}.

\begin{theorem}
 There is an identity 
\[[\M^\O\lRa{f}\M]=[\Hilb\lRa{f} \M] * [\M\lRa{\id}\M],\]
in some suitable completion $\hHall_\mot(\A)$.
\end{theorem}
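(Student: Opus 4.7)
The plan is to unpack the Hall product on the right-hand side using the Cartesian square in diagram \eqref{jo}, and then exhibit an isomorphism of stacks between the resulting fibre product and $\M^\O$ that is compatible with the forgetful maps to $\M$. The argument is the motivic upgrade of the proof of Lemma \ref{idid}: there, the key point was that any $f\colon P\to E$ factors uniquely as a surjection onto its image followed by an inclusion, and here we use exactly the same factorization, now in families.

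First I would describe the fibre product $Z$ explicitly. By definition of the Hall product, $Z$ parametrizes data consisting of a short exact sequence
\[0\lra A\lra B\lra C\lra 0\]
in $\Coh(X)$, together with a surjection $\O_X\onto A$ (coming from the $\Hilb$ factor) and nothing extra on $C$ (coming from $\id\colon\M\to\M$); and the composite map to $\M$ sends such data to the middle term $B$. I would then define a morphism of stacks $\Phi\colon \M^\O\to Z$ as follows: given a family $(E, s\colon\O_{X\times T}\to E)$ over $T$, set $A:=\im(s)\subset E$, and send $(E,s)$ to the short exact sequence $0\to A\to E\to E/A\to 0$ equipped with the induced surjection $\O_{X\times T}\onto A$. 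Conversely, I would define $\Psi\colon Z\to\M^\O$ by sending $(0\to A\to B\to C\to 0,\ q\colon\O_X\onto A)$ to the pair $(B,\ \O_X\xrightarrow{q}A\into B)$.

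The verification that $\Phi$ and $\Psi$ are mutually inverse is immediate from the universal factorization of a morphism through its image, and the compatibility with the forgetful maps to $\M$ is tautological since both sides pick out the middle term $E=B$. This gives the required identity of classes in $K(\St/\M)$, or rather in the appropriate completion.

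The main obstacle, and the reason for the caveat about working in an unspecified completion $\hHall_\mot(\A)$, is that the stacks $\M^\O$ and $\Hilb$ fail to be of finite type over $\M$: a fixed sheaf $E$ admits sections $s$ whose image $\im(s)$ has arbitrarily varying Chern character, so even after fixing $\ch(E)$ one sees infinitely many components in $\M^\O$ indexed by $\ch(A)$. The clean way to handle this is to stratify $\M^\O$ and $\Hilb$ by the Chern character of $\im(s)$; on each stratum $\im(s)$ does vary in a flat family, so $\Phi$ is genuinely a morphism of finite-type stacks and the identity holds at the level of each graded piece. One then assembles the pieces in a completion $\hHall_\mot(\A)$ designed so that the infinite sums over $\ch(A)$ converge, exactly as in \cite[Section 6]{bridgeland_curve-counting}; making this bookkeeping precise, rather than the geometric content, is where the real work lies.
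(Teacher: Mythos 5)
Your overall strategy is the same as the paper's: unwind the Hall product via the Cartesian square, and use the unique factorization of a morphism through its image to compare the resulting fibre product with $\M^\O$. But there is a subtle overclaim in the middle of your argument that the paper is careful to avoid, and which is in fact the real reason the stratification is needed.

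You assert that $\Phi\colon \M^\O\to Z$ (take the image of the section) and $\Psi\colon Z\to\M^\O$ (compose $q$ with the inclusion) are mutually inverse \emph{morphisms of stacks}, i.e.\ that $\Phi$ is an isomorphism. This is false, and the paper says so explicitly: the map $\phi$ (your $\Psi$) ``is not an isomorphism of stacks, but it does induce an equivalence on $\C$-valued points.'' The problem is not merely bookkeeping of components: $\Phi$ is not even well-defined as a morphism of stacks, because for a $T$-family $(E,s)$ the image sheaf $\im(s)\subset E$ need not be $T$-flat, so taking the image does not land in the moduli stack. Your proposed fix --- stratify by the Chern character of $\im(s)$ so that ``on each stratum $\im(s)$ does vary in a flat family'' --- is also not quite right as stated: constancy of Chern character does not imply flatness of the image over a non-reduced base, and it is not a priori clear that your strata are locally closed with the expected functor of points. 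What is true, and what the paper uses, is the general fact that a finite-type morphism of stacks which is a bijection on $\C$-points (here $\Psi\colon\cT\to\M^\O$, going the direction that \emph{is} manifestly a morphism) admits a stratification of the target by locally closed substacks over which it restricts to an isomorphism; this alone gives $[\cT\to\M]=[\M^\O\to\M]$ in $K(\St/\M)$. Separately, and independently of this, one needs the completion $\hHall_\mot(\A)$ because $\M^\O\to\M$ and $\Hilb\to\M$ are only locally of finite type --- that is the issue you correctly identify, but it is a different issue from the failure of $\Phi$ to be a morphism of stacks. So: right decomposition, right geometric input, but the passage from ``bijection on $\C$-points'' to ``equality of motivic classes'' needs the stratification lemma for the right reason, not an isomorphism of stacks that does not exist.
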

\smallskip

\noindent {\bf Sketch proof}. The product on the RHS is defined by the Cartesian square
\[\begin{CD}
\mathcal{T} & @>h>> &\M^{(2)} &@>b>> \M\\
 @VVV  &&@VV(a_1,a_2)V \\
\Hilb\times \M &@>f\times \id>> &\M\times\M\end{CD}\]
The points of  the stack $\cT$ over a scheme $S$ are therefore diagrams
\[\xymatrix@C=1.5em{  &\OO_{S\times X} \ar@{.>}[dr]^{\delta}\ar[d]_{\gamma}
\\
0 \ar[r] &A \ar[r]_{\alpha}& B  \ar[r]_\beta &C \ar[r] &0 }
\]
of $S$-flat sheaves on $S\times X$, with $\gamma$ surjective. 
Sending such a diagram to the map $\delta$ defines a morphism of stacks
\[\phi\colon \cT\to \M^\O\]
commuting with the required maps to $\M$.
This map $\phi$   is not an isomorphism of stacks, but it does induce an equivalence on $\C$-valued points, because if $S=\Spec(\C)$, every map $\delta$ factors uniquely via its image: this is the same argument we used in the finitary case.  It follows from this that we can stratify the stack $\M^\O$ by locally-closed substacks such that $\phi$ is an isomorphism over each piece.
This then gives the required identity 
\[[\cT\lRa{b\circ h} \M] = [\M^\O\lRa{f} \M]\] 
in the Grothendieck group $K(\St/\M)$. \qed
 \medskip

{\bf Notes.} The Hall product seems to have been first discovered by Steinitz \cite{steinitz} in 1901 and rediscovered by P. Hall \cite{hall} in 1959. In both cases the category $\A$ was the category of finite abelian $p$-groups. The next  step was taken by Ringel \cite{ringel} who constructed positive parts of quantized enveloping algebras of simple Lie algebras, using Hall algebras of categories of quiver representations over finite fields. 
Lusztig \cite{lusztig,lusztig2} used Hall algebras of constructible functions in characteristic zero  to prove his famous results on canonical bases of quantized enveloping algebras.  Schiffmann's lecture notes  \cite{schiffmann_notes2, schiffmann_notes}  cover these developments and much more.
Motivic Hall algebras as described above were first introduced by Joyce \cite{joyce4,joyce6,joyce7}, see also  To{\"e}n \cite[Section 3.3.3]{toen}, and  featured prominently in the work of Kontsevich and Soibelman \cite{kontsevich_soibelman1}. The survey article \cite{bridgeland_intro} covers the basics of this theory.





\section{Integration map}

We have seen in the last section an example of how a basic categorical truth can be translated into an algebraic identity in the Hall algebra, and we will see other important examples below. These identities, while rather aesthetically pleasing, are not usually particularly useful in and of themselves, because the motivic Hall algebra is such a huge and mysterious ring. What makes the theory powerful and applicable is the existence, in certain cases, of  ring homomorphisms from the Hall algebra to much more concrete skew-polynomial rings. These homomorphisms go under the name of `integration maps', since they involve integrating a cohomology class over the moduli space. 

\subsection{The virtual Poincar{\' e} invariant}
We start by stating the basic properties of the virtual Poincar{\'e} invariant  constructed by Joyce \cite[Sections 4.1--4.2]{joyce5}.
This is an algebra homomorphism
\[\chi_t\colon K(\St/\C)\to \Q(t),\]
uniquely defined by the following two properties:

\begin{itemize}
\item[(i)] If $V$ is a smooth, projective variety then
\[\chi_t(V)=\sum \dim_\C H^i(V^{\an},\C)\cdot t^i\in \Z[t].\]

\item[(ii)] If $V$ is a variety with an action of $\GL(n)$ then
\[\chi_t([V/\GL(n)])=\chi_t(V)/\chi_t(\GL(n)).\]
\end{itemize}

The existence of a virtual Poincar{\'e} polynomial for finite-type schemes over $\C$ follows from the existence of Deligne's mixed Hodge structure on the cohomology groups (see for example \cite{hausel}). A different proof relying on weak factorization can be given using the presentation of the Grothendieck group due to  Bittner \cite{bittner}. The extension to stacks follows from Kresch's result \cite{kresch} that any finite type stack over $\C$ with affine stabilizers has a stratification by  global quotient stacks $[V/\GL(n)]$.

\begin{remarks}
\label{rems}
\begin{itemize}
\item[(a)]
If $V$ is a variety then \[\lim_{t\to -1} \,\chi_t(V)=e(V)\in \Z,\]
 but when $V$ is a stack this limit need not exist, since
 \begin{equation*}
\chi_t(\GL(n))=t^{n(n-1)}\cdot  (t^2-1) (t^4-1) \cdots (t^{2n}-1).\end{equation*}
Often in the theory we shall describe one can construct invariants which are rational functions in $t$. It is then an important and subtle question to determine the  behaviour of these invariants as $t\to -1$. This relates to the question of whether the corresponding elements of the motivic Hall algebra can be represented by varieties rather than stacks.
\smallskip
\item[(b)]
If a variety $V$ is defined over $\Z$, and is cellular in the sense that it has  a stratification by affine spaces, then
\[|V(\F_q)|=\chi_t(V)|_{t=\sqrt{q}},\]
just because both sides are motivic and agree on $\mathbb{A}^k$.  In fact this equality holds whenever $|V(\F_q)|$ is a polynomial in $q$ \cite[Appendix]{hausel}. Thus, setting $q=t^2$, one can expect to compare point counts over $\F_q$ in the finitary world with Poincar{\'e} invariants in the motivic world.
\end{itemize}
\end{remarks}

\subsection{Grothendieck group and charge lattice}
Let $\A$ be an abelian category. From now on we shall assume that
  $\A$
 is linear over a field $k$, and  $\Ext$-finite, in the sense that for all objects $A,B\in \A$ 
\[\dim_k \bigoplus_{i\in \Z} \Ext^i_\A(A,B)<\infty.\]
The most important invariant of such a category is the Euler form
\[\chi(-,-)\colon K_0(\A)\times K_0(\A)\to \Z,\]
defined by the alternating sum
\[\chi(E,F)=\sum_{i\in \Z} (-1)^i \dim_k \Ext^i(E,F).\]

It is often convenient to fix a  group homomorphism \[\ch\colon K_0(\A)\to N\]
to a free abelian group $N$ of finite rank. 
We refer to $N$ as the charge lattice, and $\ch$ as the character map.
We shall always assume that this data satisfies the following two properties:

\begin{itemize}
\item[(i)]The Euler form descends to a bilinear form  $\chi(-,-)\colon N\times N\to \Z$.

\item[(ii)] The character $\ch(E)$ is locally constant in families.
\end{itemize}
Note that there is then a decomposition \[\M=\bigsqcup_{\alpha\in N} \M_\alpha,\]
into open and closed substacks, and this induces a grading 
\[\Hall_\mot(\A)=\bigoplus_{\alpha\in N} K(\St/\M_\alpha).\]

\begin{examples}
\begin{itemize}
\item[(a)]When $\A=\operatorname{Rep}(Q)$ is the category of finite-dimensional representations of a quiver $Q$, we can take the dimension vector
\[d\colon K_0(\A)\to \Z^{Q_0}.\]

\item[(b)]If $X$ is a smooth complex projective variety we can take\[\ch\colon K_0(\A)\to N=\im(\ch)\subset H^*(X,\Q),\]
to be the  Chern character. The Riemann-Roch theorem shows that the Euler form descends to $N$.
\end{itemize}
\end{examples}
 
\subsection{Quantum torus}
Given a lattice $N\isom \Z^{\oplus n}$  equipped with an integral  bilinear form $(-,-)$, we define a non-commutative  algebra  over the field $\C(t)$ by the rule
\[\C_t[N]=\bigoplus_{\alpha\in N} \C(t)\cdot x^\alpha ,\qquad  x^\alpha * x^\gamma = t^{-(\gamma,\alpha)} \cdot x^{\alpha+\gamma}.\]
This ring is called the quantum torus algebra for the form $(-,-)$. It is a non-commutative deformation of  the group ring $\C[N]$, 
which can be identified with the co-ordinate ring of the algebraic torus
\[\T=\Hom_\Z(N,\C^*)\isom (\C^*)^n.\]
Choosing a basis $(e_1,\cdots,e_n)$ for the group $ N$ gives an identification
\[\C[N]=\C[x_1^{\pm 1},\cdots, x_n^{\pm 1}].\]
The basis elements $(e_1,\cdots, e_n)$ span a positive cone $N_+\subset N$, 
and we often need the associated completion
\[\C[[N_+]]\isom \C[[x_1,\cdots, x_n]].\]
We define the completed  quantum torus algebra $\C_q[[N_+]]$ in  the same way.

\subsection
{Integration map: hereditary case}

The existence of integration maps is completely elementary when the category $\A$ is hereditary, that is when \[\Ext^i_\A(M,N)=0, \quad i>1.\]
We first consider the case of finitary Hall algebras, and hence assume that $\A$ satisfies the  finiteness conditions of Section \ref{start}. The following result was first proved by Reineke \cite[Lemma 6.1]{reineke2} in the case of representations of quivers. 

\begin{lemma}
\label{inty}
When $\A$ is hereditary there is an algebra homomorphism
\[\I\colon \Hall_\fin(\A)\to \C_t[N]|_{t=\sqrt{q}},\qquad \I(f)=\sum_{E\in \A} \frac{f(E)}{|\Aut(E)|}\cdot x^{\ch(E)},\]
whose codomain is the quantum torus for  the form $2\chi(-,-)$, specialised at $t=\sqrt{q}$.
\end{lemma}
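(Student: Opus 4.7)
\medskip

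\noindent\textbf{Proof proposal.}
The strategy is to verify multiplicativity on a convenient spanning set of $\Hall_\fin(\A)$. Since every $f$ of finite support is a finite $\C$-linear combination of the characteristic functions $\delta_E$, it suffices to check the identity $\I(f_1 * f_2) = \I(f_1) * \I(f_2)$ on the rescaled basis $\{\kappa_E = |\Aut(E)|\cdot \delta_E\}$. The advantage of the $\kappa_E$ basis is twofold: first, its Hall product is controlled by Riedtmann's Lemma~\ref{ried}; second, the automorphism factors in the definition of $\I$ cancel neatly, giving the clean formula $\I(\kappa_E) = x^{\ch(E)}$.

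Applying $\I$ termwise to Riedtmann's identity and using the fact that the Chern character is additive in short exact sequences (this is a consequence of assumption~(ii) on the character map: any extension with fixed $\alpha,\gamma$ lies in the connected component $\M_{\alpha+\gamma}$), one obtains
\[
\I(\kappa_A * \kappa_C) = \sum_{B\in\A} \frac{|\Ext^1(C,A)_B|}{|\Hom(C,A)|}\cdot x^{\ch(B)} = \frac{|\Ext^1(C,A)|}{|\Hom(C,A)|}\cdot x^{\ch(A)+\ch(C)},
\]
where the second equality uses $\ch(B)=\ch(A)+\ch(C)$ for every $B$ appearing and the fact that the subsets $\Ext^1(C,A)_B$ partition $\Ext^1(C,A)$ as $B$ varies.

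Now the hereditary hypothesis enters: since $\Ext^i_\A(C,A)=0$ for $i\geq 2$, the Euler form collapses to
\[
\chi(C,A) = \dim_k\Hom(C,A) - \dim_k\Ext^1(C,A),
\]
so that, over $k=\F_q$, the coefficient above is precisely $q^{-\chi(C,A)}$. On the other side, the quantum torus product for the form $2\chi(-,-)$ specialised at $t=\sqrt q$ gives
\[
\I(\kappa_A) * \I(\kappa_C) = x^{\ch(A)} * x^{\ch(C)} = q^{-\chi(\ch(C),\ch(A))}\cdot x^{\ch(A)+\ch(C)},
\]
matching the preceding expression (here one uses that the Euler form on $K_0(\A)$ factors through $N$, i.e.\ assumption~(i) on the character map).

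The only real content in the argument is Riedtmann's formula, which has already been assumed; everything else is bookkeeping. The step most worth flagging is the collapse of $\dim\Ext^1 - \dim\Hom$ to $-\chi(C,A)$: without heredity one would pick up correction terms involving $\dim\Ext^{\geq 2}$, and the coefficient on the right-hand side of Riedtmann's identity would no longer be expressible purely in terms of the Euler form. This is precisely why an integration map of this naive shape fails outside the hereditary (or, by a more subtle construction, CY$_3$) setting.
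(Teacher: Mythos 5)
Your proof is correct and follows essentially the same route as the paper's: work in the rescaled basis $\kappa_E$ so that $\I(\kappa_E)=x^{\ch(E)}$, apply $\I$ to Riedtmann's identity (Lemma~\ref{ried}), and match the resulting coefficient $|\Ext^1(C,A)|/|\Hom(C,A)| = q^{-\chi(C,A)}$ against the quantum torus relation using the hereditary collapse of the Euler form. The only difference is that you spell out the intermediate bookkeeping (the partition of $\Ext^1(C,A)$ by middle term, additivity of $\ch$, and the $t=\sqrt q$ specialization) which the paper leaves implicit.
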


\begin{proof}
Recall the elements $\kappa_E=|\Aut(E)|\cdot\delta_E$, and the identity
\[\kappa_A * \kappa_C= \sum_{B} \frac{|\Ext^1(C,A)_B|}{|\Hom(C,A)|}\cdot\kappa_B\]
of Lemma \ref{ried}. Since $\I(\kappa_E)=x^{\ch(E)}$,
 the result follows immediately from the identity
\begin{equation}
\label{star}\dim_k \Ext^1(C,A)-\dim_k \Hom(C,A)=-\chi(C,A),\end{equation}
which is implied by the hereditary assumption.
\end{proof}

Similar results hold in the motivic case. For example Joyce proved \cite[Theorem 6.1]{joyce4} that when $\A=\operatorname{Rep}(Q)$ is the category of representations of a quiver without relations, or when $\A
=\Coh(X)$ with $X$ a curve, there is an algebra map
\[\I\colon \Hall_\mot(\A) \to \C_{q}[N],\quad \I\big([S \to \M_\alpha]\big)=\chi_t(S)\cdot x^\alpha,\]
to the quantum torus algebra for the form $2\chi(-,-)$.
The basic reason is as for the previous result: the identity \eqref{star} implies that
the fibres of the map $(a,c)\colon \M^{(2)} \to \M\times \M$ in the crucial diagram \eqref{jo} 
 have Poincar{\' e} invariant $t^{-2\chi(\gamma,\alpha)}$ over points in the substack $\M_\alpha\times \M_\gamma$.

\begin{remark}
\label{po}
In the hereditary case it is often more convenient to skew-symmetrise the Euler form by writing
\[\<\alpha,\beta\>=\chi(\alpha,\beta)-\chi(\beta,\alpha).\]
Twisting the integration map by defining
\[\I(f)=\sum_{E\in \A} t^{\chi(E,E)}\cdot \frac{f(E)}{|\Aut(E)|}\cdot x^{\ch(E)}\]
then gives a ring homomorphism to the quantum torus algebra defined by the form $\<-,-\>$. 
In the finitary case, when $\A=\operatorname{Vect}_k$ is the category of vector spaces,  the resulting  map coincides  with that of Section \ref{vect}.
\end{remark}

\subsection{Integration map: CY$_3$ case}
Suppose that $\A=\Coh(X)$ is the category of coherent sheaves on a complex projective Calabi-Yau threefold. Note that the Euler form is skew-symmetric in this case.
Kontsevich and Soibelman \cite[Section 6]{kontsevich_soibelman1} construct  an algebra map \begin{equation}
\label{gl}\I\colon \Hall_\mot(\A)\to \C_t[N],\end{equation}
whose target is  the quantum torus for the Euler form. In fact, much more generally, Kontsevich and Soibelman define an integration map whose target is a version of the quantum torus based on a ring of motives, but we shall completely ignore such generalizations here.
 The definition of this map involves motivic vanishing cycles,  which are beyond the author's competence to explain. There are also some technical problems, for example the existence of orientation data \cite[Section 5]{kontsevich_soibelman1}. 

Joyce developed a less ambitious but completely rigorous framework which is sufficient for applications 
 to classical DT invariants. This was repackaged in \cite{bridgeland_intro} in terms of a morphism of Poisson algebras, which can be thought of as the semi-classical limit of Kontsevich and Soibelman's map. In fact, there are two versions of the story, depending on a choice of sign $\epsilon\in \{\pm 1\}$. The sign $+1$ leads to naive DT invariants, whereas $-1$ gives genuine DT invariants. 

We first introduce the semi-classical   limit of the algebra $\C_t[N]$  at $t=\epsilon$: this is a commutative Poisson algebra \[\C_\epsilon[N]=\bigoplus_\gamma \C\cdot x^\gamma\] with product and bracket given by
\[x^\alpha\cdot x^\gamma =\lim_{t\to \epsilon } \big({x^\alpha * x^\gamma}\big)=\epsilon^{\<\alpha,\gamma\>} \cdot x^{\alpha+\gamma},\]
\[\{x^\alpha,x^\gamma\} =\lim_{t\to \epsilon } \Big(\frac{x^\alpha * x^\gamma-x^\gamma* x^\alpha}{t^2-1} \Big)= \<\alpha,\gamma\>\cdot x^{\alpha}\cdot x^{\gamma}.\]

The next step is to introduce a similar semi-classical limit of  the motivic Hall algebra \cite[Section 5] {bridgeland_intro}. One first defines a subalgebra of `regular' elements
\begin{equation}\label{reg}\Hall_{\reg}(\A)\subset  \Hall_\mot(\A).\end{equation} To a first approximation it is the subspace spanned  by the symbols $[X\to \M]$ in which  $X$ is a scheme, rather than a stack. The limit as $t\to\epsilon$ can then be taken exactly as above to give a commutative Poisson algebra called the semi-classical Hall algebra $\Hall_{\rm sc}(\A)$.

One can now define a morphism of Poisson algebras
\begin{equation}
\label{saz} I_\epsilon \colon \Hall_{{\rm sc}}(\A) \to \C_\epsilon[N_+]\end{equation}
 by the formula 
\[\I \big([S \lRa{f} \M_\alpha]\big)=\begin{cases} e(S) \cdot x^\alpha &\text{if }\epsilon=+1,\\ e(S;f^*(\nu)) \cdot x^\alpha & \text{if }\epsilon= -1,\end{cases}\]
 where $\nu\colon \M\to \Z$ is the Behrend function appearing in the definition of DT invariants. 

When $\epsilon=1$, the fact that $\I_\epsilon$ is a Poisson map  just requires  the identity
\[\chi(A,C)=\big( \aext^1(C,A) -\ahom(C,A)\big)\]\[-\big( \aext^1(A,C) -  \ahom(A,C)\big),\]
which follows from the CY$_3$ assumption. In the case $\epsilon=-1$, one also needs some identities involving the Behrend function  proved by Joyce and Song \cite[Theorem 5.11]{joyce_song}.

\smallskip

{\bf Notes.} The first occurrence of an integration map is perhaps in Reineke's paper \cite{reineke2}.  This was generalised to the setting of motivic Hall algebras by Joyce  \cite[Section 6]{joyce4}. Joyce also constructed an integration map in the CY$_3$ case that is a map of Lie algebras. It was Kontsevich and Soibelman's remarkable insight \cite{kontsevich_soibelman1} that incorporating vanishing cycles could lead to an integration map which is a homomorphism of algebras. Following this, Joyce and Song \cite{joyce_song} were able to  incorporate  the Behrend function into Joyce's Lie algebra map. The interpretation  in terms of semi-classical limits and Poisson algebras can be found in  \cite{bridgeland_intro}.




\section{Generalized DT invariants}

One of the most important aspects of the work of Joyce, and of Kontsevich and Soibelman, is the generalization of Donaldson-Thomas invariants associated to moduli spaces of stable sheaves developed in \cite{thomas} to the case when there exist strictly semistable objects. The resulting invariants satisfy a wall-crossing formula which controls their behaviour under change of stability condition. Here we give a brief outline of these constructions and explain the simplest examples.

\subsection{The problem}
Let $X$ be a smooth projective Calabi-Yau threefold, and set $\A=\Coh(X)$.
Fix a polarization  of $X$ and a class $\alpha\in N$,  and consider the stack  
\[\M^{ss}(\alpha)=\big\{E\in \Coh(X): E \text{ is  Gieseker semistable   with }\ch(E)=\alpha\big\}.\]
We also consider the unions of these stacks given by sheaves of a fixed slope
\[\M^{ss}(\mu)=\big\{E\in \Coh(X): E \text{ is  Gieseker semistable   of slope }\mu(E)=\mu\big\}.\]
Note that we consider the zero object to be semistable of all slopes $\mu$.

In the case when  $\alpha$ is primitive,  and the polarization is general, the stack  $\M^\ss(\alpha)$
 is a $\C^*$-gerbe over its coarse moduli space $M^{\ss}(\alpha)$, and  we  can set \[\ET(\alpha)=e(M^{\ss}(\alpha))\in \Z.\]
Genuine DT invariants, as defined by Thomas \cite{thomas},
 are defined using virtual cycles, or by a weighted Euler characteristic as before. The problem is then to generalize these invariants to arbitrary classes $\alpha\in N$. It turns out that  even if one is only interested in the invariants $\DT(\alpha)$ for  primitive $\alpha$,  to understand the behaviour of these invariants as the polarization $\ell$ is varied, one in fact needs to treat all  $\alpha$ simultaneously.

For a general class $\alpha\in N$, the moduli stack  $\M^{ss}(\alpha)$ at least has a well-defined Poincar{\'e} function \[\qET(\alpha)=\chi_t(\M^{ss}(\alpha))\in \Q(t),\] 
which we can view as a kind of naive quantum DT invariant. When $\alpha$ is primitive, the fact that $\M^{\ss}(\alpha)$ is a $\C^*$-gerbe over the coarse moduli space $M^{\ss}(\alpha)$, together with Remark \ref{rems}(a), implies that
\[\ET(\alpha)=\lim_{t\to 1} \,(t^2-1)\cdot \qET(\alpha) \in \Z.\]
In general however, $\qET(\alpha)$ has higher-order poles at $t=1$, so it is not immediately clear how to define $\ET(\alpha)$.

\subsection{The solutions}
Joyce \cite{joyce7} worked out how to define  invariants \[\ET(\alpha)\in \Q\] for arbitrary classes $\alpha\in N$, and showed that they satisfy a wall-crossing formula as the polarization is varied. Incorporating the Behrend function into Joyce's framework  leads to generalized DT invariants $\DT(\alpha)\in\Q$ also satisfying a wall-crossing formula \cite{joyce_song}. These results rely on a very deep result \cite[Theorem 8.7]{joyce6} known as the no-poles theorem, which implies that the element 
\[[\C^*]\cdot \log \big([\M^{\ss}(\mu)\subset \M]\big) \in \hHall_\mot(\A),\]
obtained by applying the Taylor expansion of $\log(1+x)$, lies in the subalgebra $\hHall_{\reg}(\A) \subset \hHall_\mot(\A)$ discussed above. (Recall that $\M^{\ss}(\mu)$ includes a component corresponding to the zero object). Applying the Poisson integration  map \eqref{saz}  to this element then leads to a generating function 
\[\DT_\mu =-\lim_{q\to 1} (q-1)\cdot   \log\, \qDT_\mu\in \C[[N_+]] \] whose coefficients are the required invariants.

In  a different approach, Kontsevich and Soibelman \cite{kontsevich_soibelman1}  use motivic vanishing cycles to define  genuine quantum DT invariants, which  are again rational functions \[\qDT(\alpha)\in\Q(t).\] In fact they do much more: they define motivic invariants lying in the ring $K(\St/\C)$, but we shall suppress this extra level of complexity here. Note however that these results rely on the currently unproven existence of orientation data. In terms of the  map \eqref{gl}, one first considers \[\qDT_\mu=\I\big([\M^{ss}(\mu)\subset \M]\big)\in \C_t[[N_+]],\]
and sets $\qDT(\alpha)$ to be the coefficient of $x^\alpha$. 
Kontsevich and Soibelman \cite[Section 7]{kontsevich_soibelman1} also formulated a conjecture, closely related to Joyce's no-poles theorem, which states that
\[\DT_\mu =-(t^2-1)\cdot   \log\, \qDT_\mu\in \C_t[[N_+]] \]
should be regular at $t=\pm 1$. Assuming this, one can recover Joyce's invariants by setting $t=1$.

Conjugation by the  quantum DT generating function give rise to an automorphism of the quantum torus algebra
\[\operatorname{q-\S_\mu}=\operatorname{Ad} _{\qDT(\mu)}(-)\in \Aut \C_t[[N_+]].\]
The no-poles conjecture implies that this automorphism has a well-defined limit at $t=1$ which is the 
Poisson automorphism
\[\S_\mu=\exp\big\{\small{-}\rm{DT}_\mu,-\big\}\in \Aut \C[[N_+]].\]
Geometrically, this can be thought of as the action of the time 1 flow of the Hamiltonian vector field generated by the DT generating function $\DT_\mu$.

\subsection{Example: a single spherical bundle}

Suppose we are in the simplest possible situation when there is a unique stable bundle $E$ of slope $\mu$, which is moreover rigid, i.e. satisfies
$\Ext^1_X(S,S)=0.$
Serre duality implies that $S$ is in fact spherical.
  The category of semistable sheaves of slope $\mu$ is then equivalent to the category of finite-dimensional vector spaces, so \[\M^{ss}(\mu)=\{E^{\oplus n}:n\geq 0\}\isom \bigsqcup_{n\geq 0} \operatorname{BGL}(n,\C).\]
The Kontsevich-Soibelman integration map for this category is  closely related to the ring homomorphism $I$  considered in Section \ref{vect}.
Setting $\alpha=\ch(E)\in N$ we can compute

\begin{itemize}
\item[(a)] The quantum DT generating function is
\[\qDT_\mu =\sum_{n\geq 0} \frac{t^n\cdot x^{n\alpha}}{(t^{2n}-1) \cdots (t^2-1)}\in \C_t[[N_+]].\]
We recognise the quantum dilogarithm $\E_{t^2}(x^\alpha)$.

\item[(b)]
The classical DT generating function is 
\[\DT_\mu=-\lim_{t\to -1} (t^2-1)\cdot  \log  \E_{t^2}(x^\alpha) =\sum_{n\geq 1} \frac{x^{n\alpha}}{n^2},\]
and we conclude that $\DT(n\alpha)=1/n^2$.\smallskip

\item[(c)]
The Poisson automorphism $\S_\mu\in \Aut \C[[N_+]]$ is
\begin{equation}\label{clus}\S_\mu(x^\beta)= \exp \bigg\{\small{-}\sum_{n\geq 1} \frac{x^{n\alpha}}{n^2}, -\bigg\} (x^\beta)=x^\beta \cdot (1- x^\alpha)^{\<\alpha,\beta\>}.\end{equation}
\end{itemize}

The right-hand side of this identity \eqref{clus} should be expanded as a power series to give an element of $\C[[N_+]]$. However we can also view $\S_\mu$ as defining a birational  automorphism of the Poisson torus $\T$. Viewed this way, it is the basic example of a cluster transformation.

\subsection{Stability conditions}
\label{stab}
We shall now move on to discussing the behaviour of DT invariants under changes of stability parameters. Although the results apply perfectly well to the context of Gieseker stability,  the picture is perhaps clearer for  stability conditions in the sense of \cite{bridgeland_stability} which we now review.
We fix an abelian category  $\A$ throughout.

\begin{definition}
A stability condition on $\A$ is a map of  groups
$Z\colon K_0(\A)\to \C$
 such that
\[0\neq E\in \A \implies Z(E)\in \bar{\IH},\]
where $\bar{\IH}=\IH\cup\R_{<0}$ is the semi-closed upper half-plane.
\end{definition}



%


The phase of a nonzero object $E\in \A$ is
\[\phi(E)=\frac{1}{\pi}\arg Z(E)\in (0,1],\]
A nonzero object  $E\in \A$  is  said to be $Z$-semistable if
\[0\neq A\subset E\implies \phi(A)\leq \phi(E).\]
We let $\P(\phi)\subset \A$ be the full additive subcategory of $\A$ consisting of the nonzero $Z$-semistable objects of phase $\phi$, together with the zero objects. 

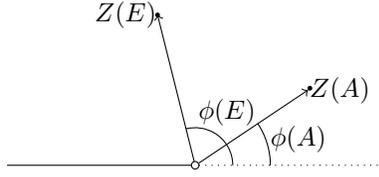
\begin{figure}
\begin{center}
\begin{tikzpicture}[scale=0.5]
\draw (-0.1,0)--(-5,0);
\draw[dotted] (0.1,0)--(5,0);
\draw (0,0) circle [radius=0.1];
\draw[->] (0.1,0.06) -- (3,2);
\draw[->] (-0.025,0.1) -- (-1,4);
\draw (3.85,2) node {\footnotesize $Z(A)$};
\draw (-1.85,4) node {\footnotesize $Z(E)$};
\draw[fill] (3.05,2.05) circle [radius=0.05];
\draw[fill] (-1,4) circle [radius=0.05];
\draw(2,0) arc [radius=2, start angle=0, end angle= 33.7];
\draw(1,0) arc [radius=1, start angle=0, end angle= 104];
\draw (2.75,.7) node {\footnotesize $\phi(A)$};
\draw (.85,1.4) node {\footnotesize $\phi(E)$};
\end{tikzpicture}
\caption{Central charges and phases}
\end{center}
\end{figure}

We say that a stability condition $Z$  has the Harder-Narasimhan property if every object $E\in \A$ has a  filtration
\[0=E_0\subset E_1\subset \cdots \subset E_n\subset E\]
such that each factor $F_i=E_i/E_{i-1}$ is nonzero and $Z$-semistable and
\[\phi(F_1)> \cdots >\phi(F_n).\]
Existence of such filtrations is a fairly weak condition: for example if $\A$ is of finite length (Artinian and Noetherian) it is automatic. When they exist, Harder-Narasimhan filtrations are necessarily unique, because the usual argument shows that if $F_1,F_2$ are $Z$-semistable then
\[\phi(F_1)>\phi(F_2) \implies \Hom_\A(F_1,F_2)=0,\]
and another standard argument then gives uniqueness.



\subsection{Wall-crossing identity}
\label{wc}
Let  us consider the wall-crossing formula in the finitary context. So assume that $\A$ is an abelian category satisfying the strong finiteness conditions of Section \ref{start}. Let us also equip $\A$  with a stability condition $Z$ having the Harder-Narasimhan property. 
Let \[\delta^{\ss}(\phi)\in \hHall_{\fin}(\A)\] be the characteristic function of  the subcategory $\P(\phi)\subset \A$.  We define the element $\delta_\A\in \hHall_{\fin}(\A)$ as in Section \ref{start}. The following crucial result was first proved by Reineke \cite{reineke2}.

\begin{lemma}
\label{god}There is an identity
\[\delta_\A= \prod^{\to}_{\phi\in \R} \delta^{\ss}(\phi)\]
in the Hall algebra $\hHall_{\fin}(\A)$, where the product is taken in descending order of phase.
\end{lemma}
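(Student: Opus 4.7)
Both sides lie in $\hHall_{\fin}(\A)$, and since $\delta_\A$ is the constant function $1$, it suffices to check that the right-hand side also evaluates to $1$ on every object $E \in \A$.

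The first task is to give precise meaning to the a priori infinite product when evaluated on $E$. The multiple-product formula recalled right after the definition of $\hHall_{\fin}(\A)$ shows that only chains of subobjects of $E$ contribute, and such chains are finite thanks to the finiteness assumption on $\A$. Factors $\delta^{ss}(\phi)$ applied to a zero subquotient contribute $1$ and may be discarded, and each phase $\phi$ appears in only one factor of the product. Putting these observations together,
\[\Bigl(\prod^{\to}_{\phi\in \R} \delta^{ss}(\phi)\Bigr)(E)\]
equals the number of filtrations $0 = E_0 \subset E_1 \subset \cdots \subset E_n = E$ whose successive quotients $F_i = E_i/E_{i-1}$ are nonzero and semistable with strictly decreasing phases $\phi(F_1) > \cdots > \phi(F_n)$. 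The strictness is forced by the fact that each phase contributes only one factor; two successive factors of equal phase would have to arise from a single $\delta^{ss}(\phi)$, evaluated on a single semistable object of phase $\phi$, rather than on two separate pieces.

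At this point I invoke the Harder--Narasimhan hypothesis on $Z$ to produce at least one such filtration for any $E$. Uniqueness is the standard argument recalled in the text just above the lemma: semistables $A, B$ with $\phi(A) > \phi(B)$ satisfy $\Hom_\A(A,B) = 0$, and a short induction on length then forces any two HN filtrations of $E$ to agree. Hence the sum contains exactly one term, equal to $1$, matching $\delta_\A(E)$. The only subtle step is the initial bookkeeping showing that the formal infinite product really reduces to counting HN-type filtrations; the substantive mathematical input is the existence and uniqueness of HN filtrations, and both are handed to us.
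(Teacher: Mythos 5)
Your proof is correct and takes essentially the same route as the paper's: the paper makes your ``discard the zero contributions'' step explicit by writing $\delta^{\ss}(\phi)=1+\delta^{\ss}_+(\phi)$ and expanding the infinite product into a sum of finite products $\sum_{\phi_1>\cdots>\phi_k}\delta^{\ss}_+(\phi_1)*\cdots*\delta^{\ss}_+(\phi_k)$, but the resulting count over filtrations with nonzero semistable factors of strictly decreasing phase, and the appeal to existence and uniqueness of Harder--Narasimhan filtrations, are identical to what you wrote.
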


\begin{proof}
To make sense of the infinite product, we first write $\delta^\ss(\phi)=1+\delta^\ss(\phi)_+$ where $\delta^\ss(\phi)_+$ is the characteristic function of the set of nonzero semistable objects of phase $\phi$.
Then we can rewrite the infinite product as an infinite sum
\[\prod^{\to}_{\phi\in \R} \delta^{\ss}(\phi)=\prod^{\to}_{\phi\in \R} (1+\delta_+^{\ss}(\phi))=1+\sum_{k\geq 1} \sum_{\phi_1>\cdots>\phi_k} \delta^\ss_+(\phi_1) * \cdots * \delta^\ss_+(\phi_k).\]
Using the formula \eqref{multiple} for multiple products in the Hall algebra, it is clear that evaluating the right-hand side on any object $M\in \A$ produces a sum over the finitely many filtrations of $M$, each taken with coefficient $0$ or $1$. Moreover, a filtration has coefficient 1 precisely if its factors are $Z$-semistable with descending phase. The identity thus follows from existence and uniqueness of Harder-Narasimhan filtrations.
\end{proof}
 
The left-hand side of the  identity of Lemma \ref{god} is independent of the stability condition $Z$. Thus  given two stability conditions on $\A$ we get a wall-crossing formula
\begin{equation}
\label{wcc}\prod^{\lra}_{\phi\in\R}\delta^{\ss}(\phi,Z_1) = \prod^{\lra}_{\phi\in\R} \delta^{\ss}(\phi,Z_2).\end{equation}
If $\A$ is moreover hereditary we can then apply the integration map of Lemma \ref{inty}  to get an identity in the corresponding completed quantum torus algebra  $\C_t[[N_+]]$.
Considering the  automorphisms of $\C_t[[N_+]]$ given by conjugation of the two sides of \eqref{wcc}, and  taking the limit as $t\to 1$, we also obtain an identity
in the group of  automorphisms of the Poisson algebra  $\C[[N^+]]$. We will work through the simplest non-trivial  example of this in the next subsection.

\subsection{Example: the $A_2$ quiver}
Let $Q$ be the $A_2$ quiver: it has two vertices $1$ and $2$, and a single arrow from $1$ to $2$.
Let $\A$ be the abelian category of finite-dimensional representations of $Q$ over the field $k=\F_q$.  This category has exactly three indecomposable representations, which fit into a short exact sequence \[0\lra S_2\lra E\lra S_1\lra 0.\]
Here $S_1$ and $S_2$ are the simple representations at the vertices $1$ and $2$ respectively, and $E$ is the unique indecomposable representation of dimension vector $(1,1)$.
We have $N=K_0(\A)=\Z^{\oplus 2}=\Z[S_1]\oplus\Z[S_2]$. As in Remark \ref{po} we consider  the skew-symmetrised Euler form
\[\<(m_1,n_1),(m_2,n_2)\>=m_2 n_1-m_1 n_2.\]
The corresponding quantum torus algebra is
\[\C_t[[N_+]]=\C\<\<x_1,x_2\>\>/(x_2*x_1-t^2\cdot x_1 *x_2),\]
and its semi-classical limit at $t=1$ is the Poisson algebra
 \[\C[[N_+]]= \C[[x_1,x_2]],\quad \{x_1,x_2\}=-x_1\cdot x_2.\]
A stability condition on $\A$ is determined by the pair $(Z(S_1),Z(S_2))$, so the space of all  such stability conditions is $\Stab(\A)\isom \bar{\IH}^2$. There is a single wall
\[\mathcal{W}=\{Z\in \Stab(\A): \Im Z(S_2)/Z(S_1)\in \R_{>0}\},\]
where the object $E$ is strictly semistable.
The complement of this wall consists of two chambers: in one $E$ is strictly stable, in the other it is unstable.

\begin{figure}
\begin{tikzpicture}[scale=0.5]
\draw (0,0)--(4.9,0);
\draw[dotted] (5.1,0)--(10,0);
\draw (5,0) circle [radius=0.1];
\draw[->] (5.1,0.1) -- (8,2);
\draw[->] (4.9,0.1) -- (3,2);
\draw[->,dashed](5.1,0.1)--(6,4);
\draw (9.4,2) node { $\scriptstyle Z(S_1)$};
\draw (1.75,2) node {$\scriptstyle Z(S_2)$};
\draw (4.8,4) node {$\scriptstyle Z(E)$};

\draw[fill] (8.05,2.05) circle [radius=0.05];
\draw[fill] (2.95,2.05) circle [radius=0.05];
\draw[fill] (5.95,4.05) circle [radius=0.05];

\draw[thick](12.5,-1.5)--(12.5,4);
\draw (12.5,5) node {\small $\mathcal{W}$};
\draw (19.9,0)--(15,0);
\draw[dotted] (20.1,0)--(25,0);

\draw (5,-1) node {$E$ unstable};
\draw (20,-1) node {$E$ stable};

\draw (20,0) circle [radius=0.1];
\draw[->] (20.1,0.06) -- (23,2);
\draw[->] (19.9,0.05) -- (16,2);
\draw[->] (19.975,0.1) -- (19,4);
\draw (24.25,2) node { $\scriptstyle Z(S_2)$};
\draw (14.75,2) node { $\scriptstyle Z(S_1)$};
\draw (17.9,4) node { $\scriptstyle Z(E)$};
\draw[fill] (23.05,2.05) circle [radius=0.05];
\draw[fill] (16,2) circle [radius=0.05];
\draw[fill] (19,4) circle [radius=0.05];
\end{tikzpicture}
\caption{Wall-crossing for the A$_2$ quiver.}
\end{figure}
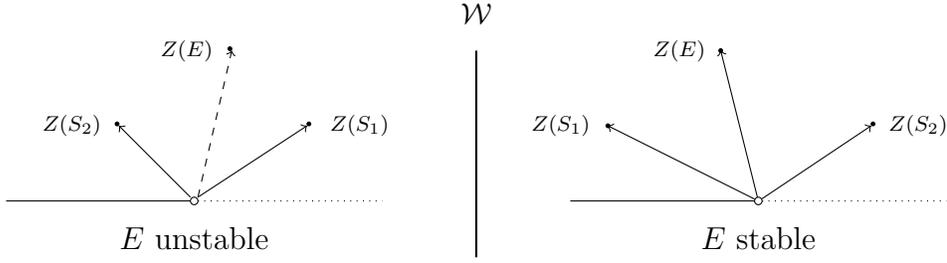

The wall-crossing formula in $\C_t[[N_+]]$ becomes the  identity
\[\E_q(x_2)*\E_q(x_1)=\E_q(x_1)*\E_q(t\cdot x_1 *x_2)*\E_q(x_2),\]
where $q=t^2$ as usual. This is known as the pentagon identity for the quantum dilogarithm: see \cite[Section 1]{keller} for references.
The semi-classical version of the wall-crossing formula is the cluster identity
\[C_{(0,1)} \circ C_{(1,0)} = C_{(1,0)} \circ C_{(1,1)} \circ C_{(0,1)}.\]
\[C_\alpha\colon x^\beta \mapsto  x^\beta\cdot (1+x^\alpha)^{\<\alpha,\beta\>}\in \Aut \C[[x_1,x_2]].\]
It can be viewed in the group of birational automorphisms of $(\C^*)^2$ which preserve the invariant symplectic form.

\smallskip

{\bf Notes.} The crucial observation that the existence and uniqueness of Harder-Narasimhan filtrations leads to an identity in the Hall algebra is due to Reineke \cite{reineke2}. This idea was taken up by Joyce to give a wall-crossing formula for naive Donaldson-Thomas invariants \cite[Theorem 6.28]{joyce7}. Joyce's formula is combinatorially messy, although perfectly usable \cite{toda2,toda6}. It was Kontsevich and Soibelman \cite{kontsevich_soibelman1} who uncovered the connection with cluster transformations. We recommend Keller's article \cite{keller} for more on the wall-crossing formula in the context of representations of quivers.




\section{Framed invariants and tilting}

It often happens that  the invariants in which one is interested relate to objects of an abelian category equipped with some kind of framing. 
For example, the Hilbert scheme parameterizes  sheaves $E\in \Coh(X)$ equipped with a surjective map \[f\colon \O_X\onto E.\]
One immediate advantage  is that the framing data eliminates all stabilizer groups, so the moduli space is  a scheme, and therefore has a well-defined Euler characteristic. On the other hand it is less obvious how to consider wall-crossing in this framework: what is the stability condition which we should vary?
In fact  
 wall-crossing can often be achieved in this context by varying the t-structure on the derived category $\D^b\Coh(X)$. This has the effect of varying which maps $f$ are considered to be surjective.  

\subsection{T-structures and hearts}
We recall the definition of a bounded t-structure.
Let $\D$ be a triangulated category.

\begin{defn}
 A heart $\A\subset\D$ is a full  subcategory such that:
\smallskip

\begin{itemize}
\item[(a)]$\Hom(A[j],B[k])=0$ for all $A,B\in\A$ and
$j>k$.
\smallskip

\item[(b)] for every object $E\in\D$ there is a finite filtration
\[0=E_m\to E_{m+1}\to \cdots \to E_{n-1}\to E_n=E\]
with factors
$F_j=\operatorname{Cone}(E_{j-1}\to E_j) \in \A[-j]$. 
\end{itemize}
\end{defn}

In condition (b) the word filtration really means  a
finite sequence of triangles
\[
\xymatrix@C=.2em{ 0_{\ } \ar@{=}[r] & E_{m-1} \ar[rrrr] &&&& E_{m}
\ar[dll] \ar[rr] \ar[dll] && \cdots \ar[rr] && E_{n-1}
\ar[rrrr] &&&& E_n \ar[dll] \ar@{=}[r] &  E_{\ } \\
&&& F_{m} \ar@{-->}[ull]  &&&&&&&& F_n \ar@{-->}[ull] }
\]
with $F_j\in\A[-j]$.

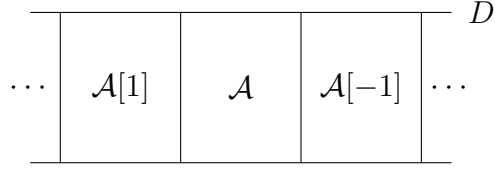
\begin{figure}
\begin{center}
\begin{tikzpicture}[scale=0.4]
\draw (0,4)--(14,4);

\draw (0,9)--(14,9);
\draw (1,4)--(1,9);
\draw (5,4)--(5,9);
\draw (9,4)--(9,9);
\draw (13,4)--(13,9);
\draw (0,6.5) node {$\cdots$};
\draw (14,6.5) node {$\cdots$};
\draw (3,6.5) node {$\A[1]$};
\draw (7,6.5) node {$\A$};
\draw (11,6.5) node {$\A[-1]$};
\draw (15,9) node {$\D$};

\end{tikzpicture}
\caption{The `film-strip' picture of a t-structure.}
\end{center}
\end{figure}

It would be more {standard} to say that $\A\subset \D$ is the heart of a bounded t-structure on $\D$. But any such t-structure is determined by its heart. The basic example is $\A\subset
\D^b(\A)$. In analogy with that case we define \[H^j_\A(E):=F_j[j]\in \A.\]
It follows from the above definition that $\A$ is in fact an abelian category.
The short exact sequences in $\A$ are precisely the  triangles in $\D$ all of whose terms lie in $\A$. Finally, the inclusion functor gives a canonical identification $K_0(\A)\isom K_0(\D)$.

%
%

\subsection{Tilting at torsion pairs}
We now explain how to tilt a heart at a torsion pair \cite{tilt}. This is an important method for obtaining new t-structures from old.

\begin{defn}Let $\A$ be an abelian category.
A torsion pair $(\TT,\FF)\subset \A$ is a pair of full subcategories
such that:

\begin{itemize}
\item[(a)]
$\Hom_\A(T,F)=0$ for $T\in
\TT$ and $F\in\FF$.\smallskip

\item[(b)]
for every object $E\in\A$ there is a  short
exact sequence
\[0\lra T\lra E\lra F\lra 0\] for some pair of objects $T\in\TT$ and
$F\in  \FF$.
\end{itemize}
\end{defn}

Suppose $\A\subset \D$ is a heart, and   $(\TT,\FF)\subset\A$  a torsion pair.
We can define a new heart $\A^\sharp\subset\D$ such that 
an object $E\in \D$ lies in $\A^\sharp\subset \D$ precisely if   \[H_\A^{0}(E)\in \FF,\quad H_\A^1(E)\in \TT, \quad H^i_\A(E)=0\ \text{ otherwise}.\] 
This process is illustrated in Figure \ref{tiltfig}. The heart $\A^\sharp$ is called the right tilt of the heart $\A$ at the torsion pair $(\TT,\FF)$. The left tilt is the subcategory $\A^\sharp[1]$. 

\begin{figure}
\begin{center}
\begin{tikzpicture}[scale=0.4]
\draw (-6,4)--(19,4);

\draw (-6,9)--(19,9);
\draw (1.25,4)--(1.25,9);
\draw (4.25,4)--(4.25,9);
\draw (9,4)--(9,9);
\draw (12,4)--(12,9);

\draw(-3.75,4)--(-3.75,9);
\draw(16.75,4)--(16.75,9);

\draw (2.75,6.5) node {$\TT$};
\draw (6.5,6.5) node {$\FF$};
\draw (10.5,6.5) node {$\TT[-1]$};
\draw (-1.25,6.5) node {$\FF[1]$};

\draw (14.25,6.5) node {$\FF[-1]$};

\draw (18.25,6.5) node {$\cdots$};
\draw (-4.75,6.5) node {$\cdots$};
\draw [decorate, decoration={brace,amplitude=5pt}] (4,9.5)--(11.75,9.5)
node [midway, above=6pt] {$\A^\sharp$};

\draw [decorate, decoration={brace,amplitude=5pt}] (9.25,3.5)--(1.5, 3.5)
node [midway, below=6pt] {$\A$};
\end{tikzpicture}
\caption{Tilting a heart $\A\subset \D$ at a torsion pair $(\TT,\FF)\subset \A$. \label{tiltfig} }
\end{center}
\end{figure}

\subsection{Examples of tilts}

Let us consider the right tilt of the standard heart \[\A=\Coh(X)\subset \D^b\Coh(X)\] with respect to the torsion pair
\[\TT=\{E\in \Coh(X):\dim \operatorname{supp}(E)=0\},\]
\[\FF=\{E\in \Coh(X): \Hom_X(\O_x,E)=0 \text{ for all }x\in X\}.\]
Thus $\TT$ consists of zero-dimensional  sheaves, and $\FF$ consists of sheaves with no zero-dimensional torsion.
Note that \[\O_X\in \FF\subset \A^\sharp.\]
We claim that the stable pairs moduli space of Section \ref{poop}  is the analogue of the Hilbert scheme in this tilted context.

\begin{lemma}
The stable pairs moduli space $\PHilb(\beta,n)$ parameterizes quotients of $\O_X$ in the tilted category $\A^\sharp$:
\[\PHilb(\beta,n)=\bigg\{\parbox{13em}{\centering \rm quotients $\O_X\onto E$ in $\A^\sharp$ with $\ch(E)=(0,0,\beta,n)$}\bigg\}.\]
\end{lemma}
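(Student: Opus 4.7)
The plan is to translate a surjection in $\A^\sharp$ into a sheaf-level diagram via the long exact sequence of $\A$-cohomology, and read off the two defining properties of a stable pair from the two possible cohomology degrees in $\A^\sharp$. The first thing to note is that $\OO_X$ is torsion-free (as $X$ is smooth), hence $\OO_X \in \FF \subset \A^\sharp$, so the phrase ``quotient of $\OO_X$ in $\A^\sharp$'' makes sense.

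For the forward direction, suppose $\OO_X \onto E$ is an epimorphism in $\A^\sharp$, giving a short exact sequence $0 \to K \to \OO_X \to E \to 0$ in $\A^\sharp$, equivalently a distinguished triangle $K \to \OO_X \to E \to K[1]$ in $\D^b\Coh(X)$. Since $\OO_X$ is concentrated in $\A$-degree $0$ and $K,E \in \A^\sharp$ have $\A$-cohomology only in degrees $0$ and $1$, the long exact sequence of $\A$-cohomology collapses to
\[ 0 \to H^0_\A(K) \to \OO_X \to H^0_\A(E) \to H^1_\A(K) \to 0, \qquad H^1_\A(E)=0. \]
Thus $E = H^0_\A(E) \in \FF$ is an honest coherent sheaf with no zero-dimensional torsion, and the induced sheaf map $f\colon \OO_X \to E$ has $\coker(f) = H^1_\A(K) \in \TT$ zero-dimensional. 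Since $\ch(E)=(0,0,\beta,n)$ forces $\operatorname{supp}(E)$ to have dimension $\leq 1$, combining with ``no zero-dimensional torsion'' yields that $E$ is pure of dimension one. Hence $f$ is a stable pair in the sense of Section \ref{poop}.

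For the reverse direction, start with a stable pair $f\colon \OO_X \to E$. Purity of $E$ gives $E \in \FF$; letting $K^0 = \ker(f)$ and $K^1 = \coker(f)$ we have $K^0 \in \FF$ (as a subsheaf of $\OO_X \in \FF$) and $K^1 \in \TT$ by stability. The four-term exact sequence in $\Coh(X)$ splices into a distinguished triangle $K \to \OO_X \to E \to K[1]$ where $K$ is the unique object of $\A^\sharp$ with $H^0_\A(K)=K^0$, $H^1_\A(K)=K^1$; this is precisely a short exact sequence in $\A^\sharp$, and Chern characters match. The two constructions are mutually inverse, so we get the claimed bijection on $\C$-points. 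For the scheme-theoretic/moduli statement one upgrades this to families over a base $S$ by working in $\D^b\Coh(S\times X)$ with the relative $t$-structure pulled back from $\A^\sharp$; the key input is that $S$-flatness of $E$ together with $E_s \in \FF$ for every $s\in S$ makes the above cohomology-sheaf computations commute with base change.

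The main obstacle is not the pointwise bijection, which is essentially a diagram chase with the cohomology long exact sequence, but the family statement: one must verify that an $S$-flat family of stable pairs gives rise to an $S$-flat family of quotients in $\A^\sharp$ and vice versa, which requires checking that the relevant derived pushforwards and truncations behave well under base change. In our motivic/Euler-characteristic setting this is enough to ensure agreement of the underlying stacks, which is what is needed for the Hall algebra arguments of Section \ref{motquot}.
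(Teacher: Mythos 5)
Your proof takes essentially the same route as the paper: in both directions you form the triangle $K \to \O_X \to E \to K[1]$, take $\A$-cohomology, and read off that $E \in \FF$, $\coker(f) \in \TT$ (or conversely that $K \in \A^\sharp$). One small inaccuracy: $K$ is not ``the unique object of $\A^\sharp$ with $H^0_\A(K)=K^0$, $H^1_\A(K)=K^1$'' -- such an object carries an extension class and is not determined by its two cohomology sheaves; rather, $K$ should simply be taken to be (a shift of) the cone of $f$, and the long exact sequence then identifies its cohomology and places it in $\A^\sharp$, which is what you in fact use.
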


\begin{proof} Given a short exact sequence in the category $\A^\sharp$
\[0\lra J\lra \O_X\lRa{f} E\lra 0,\]
we can take cohomology with respect to the standard heart $\A\subset \D$ to get a long exact sequence  in the category $\A$ 
\begin{equation}
\label{gard}0\to H_\A^0(J) \to \O_X \lRa{f} H_\A^0(E) \to H_\A^1(J)\to 0\to H^1_\A(E)\to0.\end{equation}
 It follows that $E\in \A\cap\A^{\sharp}=\FF$ and $\coker(f)=H^1_\A(J)\in \TT$. This is precisely the condition that $f\colon \O_X\to E$ defines a stable pair.

 For the converse, take a stable pair and embed it in a triangle
 \begin{equation}
 \label{gar}J\lra \O_X\lRa{f} E \lra J[1].\end{equation}
 By definition $E\in \FF\subset \A^\sharp$. 
 The same long exact sequence \eqref{gard} then shows that $J\in \A^\sharp$. It follows that \eqref{gar} defines a short exact sequence in $\A^\sharp$.
 \end{proof}

Tilting also allows to give a precise description of the effect of a threefold flop \[\xymatrix@C=1.4em{ X_+\ar[dr]_{f_+}&&
X_-\ar[dl]^{f_-} \\
&Y }\]
on the derived category. Suppose for simplicity that each map $f_\pm$ contracts a single rational curve $C_\pm$. Introduce subcategories
\[\FF_+=\< \O_{C_+}(-i)\>_{i\geq 1}\subset \Coh(X_+),\quad \FF_-=\< \O_{C_-}(-i)\>_{i\geq 2}\subset \Coh(X_-),\]
where the angular brackets denote extension-closure.
These subcategories turn out to be torsion-free parts of torsion pairs on the categories $\Coh(X_\pm)$ \cite{vdb}.   Moreover, the equivalence $\D(X_+)\isom \D(X_-)$ constructed in \cite{bridgeland_flop}  induces an exact equivalence between the corresponding tilted categories $\Per^{\pm}(X_\pm/Y)$. This is illustrated in Figure \ref{flop_fig}. 

\begin{figure}

\begin{center}
\begin{tikzpicture}[scale=0.6]
\draw (0,0.7)--(13,0.7);
\draw (0,3.1)--(13,3.1);
\draw (1.5,0.7)--(1.5,3.1);
\draw (4,0.7)--(4,3.1);
\draw (9.25,0.7)--(9.25,3.1);
\draw (11.75,0.7)--(11.75,3.1);
\draw (2.75,1.75) node {\footnotesize $\FF_+[1]$};
\draw (6.5,1.75) node {\footnotesize $\TT_+$};
\draw (10.5,1.75) node {\footnotesize $\FF_+$};

\draw (13,1.75) node {\footnotesize $\cdots$};
\draw (0.5,1.75) node {\footnotesize $\cdots$};
\draw (17,1.75) node {\footnotesize $\D^b(X_+)$};
\draw [decorate, decoration={brace,amplitude=5pt}] (4,3.4)--(11.75,3.4)
node [midway, above=6pt] {\footnotesize $\Coh(X_+)$};
\draw [decorate, decoration={brace,amplitude=5pt}] (9.25,0.5)--(1.5, 0.5)
node [midway, below=6pt] {\footnotesize $\Per^+(X_+/Y)$};

\draw (0,-7.7)--(13,-7.7);
\draw (0,-5.3)--(13,-5.3);
\draw (1.5,-7.7)--(1.5,-5.3);
\draw (4,-7.7)--(4,-5.3);
\draw (9.25,-7.7)--(9.25,-5.3);
\draw (11.75,-7.7)--(11.75,-5.3);
\draw (2.75,-6.5) node {\footnotesize $\FF_-[1]$};
\draw (6.5,-6.5) node {\footnotesize $\TT_-$};
\draw (10.5,-6.5) node {\footnotesize $\FF_-$};

\draw[->] (17,1)--(17,-5.5);
\draw(17.55,-2.25) node{\footnotesize $\isom$};
\draw (13,-6.5) node {\footnotesize $\cdots$};
\draw (0.5,-6.5) node {\footnotesize $\cdots$};
\draw (17,-6.5) node {\footnotesize $\D^b(X_-)$};
\draw[->] (5.6,-1.7)--(5.6,-2.9);
\draw(6.25,-2.3) node{\footnotesize $\isom$};
\draw [decorate, decoration={brace,amplitude=5pt}] (1.5,-5.1)--(9.25,-5.1)
node [midway, above=6pt] {\footnotesize $\Per^{-}(X_-/Y)$};
\draw [decorate, decoration={brace,amplitude=5pt}] (11.75, -7.9)--(4,-7.9)
node [midway, below=6pt] {\footnotesize $\Coh(X_-)$};

\draw (-3.5,-6.5) node {\footnotesize$X_-$};
\draw (-3.5,-2.375) node {\footnotesize$Y$};
\draw (-3.5,1.75) node {\footnotesize$X_+$};
\draw[->] (-3.5,-5.5)--(-3.5,-3);
\draw[->] (-3.5,1)--(-3.5,-1.5);
\end{tikzpicture}
\end{center}
\caption{Effect of a flop on the derived category.\label{flop_fig}}
\end{figure}
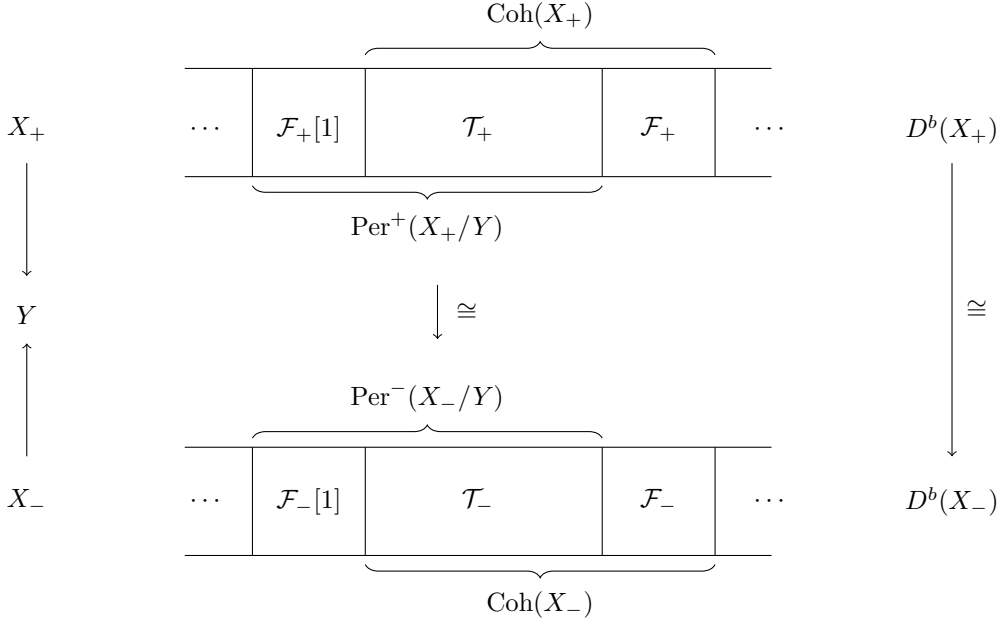

\subsection{Sketch proof of the DT/PT identity.}

Comparing  the identites of Theorems \ref{toda1} and \ref{toda2}  with the tilts described in the last section, one starts to see that one would like to turn the categorical decompositions coming from torsion pairs into identities involving  generating functions of DT invariants. In this section, abandoning all pretence at rigour, we shall explain roughly how this works in the  case of Theorem \ref{toda1}. For a rigorous treatment see  \cite{bridgeland_curve-counting}.

Take notation as in the last subsection.  For any suitable subcategory $\CC$ of $\A$, we consider the elements \[\delta_\CC,\  \delta^\OO_\CC,\  \Quot_\CC\in \hHall_\mot(\A),\]
defined by the stack of objects $E$ of $\CC$, the stack of objects $E$ of $\CC$ equipped with a section $\OO_X\to E$, and the stack of objects $E$ of $\CC$ equipped with a surjective map $\OO_X\to E$, respectively, each  of these stacks being considered with the obvious forgetful map to the stack $\M$ of objects of $\A$.
We will allow ourselves to similarly use elements of the motivic Hall algebra of  $\A^\sharp$, although in reality one can make all calculations in the algebra  $\hHall_\mot(\A)$.

We proceed in three steps:

\begin{itemize}
\item[(i)] Every object $E\in \A$ fits into a unique short exact sequence \[0\lra T\lra E\lra F\lra 0\]
 with $T\in \TT$ and $F\in \FF$. Similarly every $E\in \A^\sharp$ sits in a unique short exact sequence
 \[0\lra F\lra E\lra T[-1]\lra 0.\]
 This gives rise to a torsion pair identities
\[\delta_\A=\delta_\TT * \delta_\FF, \quad \delta_{\A^\sharp}=\delta_\FF*\delta_{\TT[-1]}.\]
Applying $H^0(X,-)$ to the above short exact sequences gives short exact sequences of vector spaces: this is due to cohomology vanishing conditions such as $H^i(X,F)=0$ for $i\notin\{0,1\}$ and $H^i(X,T)=0$ for $i\neq 0$. This gives rise to further identities
 \[\delta^\O_\A=\delta^\O_\TT * \delta^\O_\FF\text{ and } \delta^\O_{\A^\sharp}=\delta^\O_\FF*\delta^\O_{\TT[-1]}.\] 

\item[(ii)]Exactly as in Section \ref{motquot} we have quotient identities
\[\delta^\O_\A  = \Quot^\O_\A * \delta_\A, \quad \delta^\O_{\A^\sharp}  = \Quot^\O_{\A^\sharp} * \delta_{\A^\sharp}, \quad  \delta_\TT^\O =\Quot^\O_\TT* \delta_\TT.\]
On the other hand $H^0(X,T[-1])=0$ implies that $\delta^\O_{\TT[-1]}=\delta_{\TT[-1]}$.
Putting all this together gives \[\Quot^\O_\A * \delta_\TT=\Quot_\TT^\O * \delta_\TT*\Quot^\O_{\A^\sharp}.\]

\item[(iii)]
We have restricted to sheaves supported in dimension $\leq 1$. The Euler form is  trivial so the quantum torus is commutative. Thus
\[\I(\Quot^\O_\A)=\I(\Quot_\TT^\O) * \I(\Quot^\O_{\A^\sharp}).\]
 Setting $t=\pm 1$  then gives the required identity 
\[\sum_{\beta,n} 
\DT(\beta,n)x^\beta y^n=\sum_n\DT(0,n) y^n\cdot \sum_{\beta,n} \PT(\beta,n) x^\beta y^n.\]
\end{itemize}

\smallskip

{\bf Notes.} The tilting operation was introduced in \cite{tilt}. Its application to threefold flops was explained by Van den Bergh \cite{vdb}, following work of the author \cite{bridgeland_flop}. The approach to Theorem 2 sketched above comes from \cite{bridgeland_curve-counting}. A similar proof of Theorem 1 was given by Calabrese \cite{calabrese1}. Toda had previously proved both results for naive DT invariants \cite{toda2,toda6} using Joyce's wall-crossing formula  for rank 1 objects in the derived category. Following technical advances \cite{toda7} his results now also apply to genuine DT invariants.

\bibliography{hall}{}

\begin{thebibliography}{10}

\bibitem{atiyah_bott}
M.~F. Atiyah and R.~Bott.
\newblock Yang-{M}ills and bundles over algebraic curves.
\newblock In {\em Geometry and analysis}, pages 11--20. Indian Acad. Sci.,
  Bangalore, 1980.

\bibitem{behrend}
Kai Behrend.
\newblock Donaldson-{T}homas type invariants via microlocal geometry.
\newblock {\em Ann. of Math. (2)}, 170(3):1307--1338, 2009.

\bibitem{bittner}
Franziska Bittner.
\newblock The universal {E}uler characteristic for varieties of characteristic
  zero.
\newblock {\em Compos. Math.}, 140(4):1011--1032, 2004.

\bibitem{bridgeland_flop}
Tom Bridgeland.
\newblock Flops and derived categories.
\newblock {\em Invent. Math.}, 147(3):613--632, 2002.

\bibitem{bridgeland_stability}
Tom Bridgeland.
\newblock Stability conditions on triangulated categories.
\newblock {\em Ann. of Math. (2)}, 166(2):317--345, 2007.

\bibitem{bridgeland_curve-counting}
Tom Bridgeland.
\newblock Hall algebras and curve-counting invariants.
\newblock {\em J. Amer. Math. Soc.}, 24(4):969--998, 2011.

\bibitem{bridgeland_intro}
Tom Bridgeland.
\newblock An introduction to motivic {H}all algebras.
\newblock {\em Adv. Math.}, 229(1):102--138, 2012.

\bibitem{bridgeland_maciocia}
Tom Bridgeland and Antony Maciocia.
\newblock Fourier-{M}ukai transforms for {$K3$} and elliptic fibrations.
\newblock {\em J. Algebraic Geom.}, 11(4):629--657, 2002.

\bibitem{calabrese1}
John Calabrese.
\newblock Donaldson-{T}homas invariants and flops.
\newblock {\em J. Reine Angew. Math.}, 716:103--145, 2016.

\bibitem{engel_reineke}
Johannes Engel and Markus Reineke.
\newblock Smooth models of quiver moduli.
\newblock {\em Math. Z.}, 262(4):817--848, 2009.

\bibitem{fock_goncharov}
V.~V. Fock and A.~B. Goncharov.
\newblock Cluster ensembles, quantization and the dilogarithm. {II}. {T}he
  intertwiner.
\newblock In {\em Algebra, arithmetic, and geometry: in honor of {Y}u. {I}.
  {M}anin. {V}ol. {I}}, volume 269 of {\em Progr. Math.}, pages 655--673.
  Birkh\"auser Boston, Inc., Boston, MA, 2009.

\bibitem{hall}
Phillip Hall.
\newblock The algebra of partitions.
\newblock {\em Proceedings of the 4th Canadian mathematical congress}, pages
  147--159, 1959.

\bibitem{tilt}
Dieter Happel, Idun Reiten, and Sverre~O. Smal{\o}.
\newblock Tilting in abelian categories and quasitilted algebras.
\newblock {\em Mem. Amer. Math. Soc.}, 120(575):viii+ 88, 1996.

\bibitem{harder_narasimhan}
G.~Harder and M.~S. Narasimhan.
\newblock On the cohomology groups of moduli spaces of vector bundles on
  curves.
\newblock {\em Math. Ann.}, 212:215--248, 1974/75.

\bibitem{hausel}
Tam{\'a}s Hausel and Fernando Rodriguez-Villegas.
\newblock Mixed {H}odge polynomials of character varieties.
\newblock {\em Invent. Math.}, 174(3):555--624, 2008.
\newblock With an appendix by Nicholas M. Katz.

\bibitem{huli}
Jianxun Hu and Wei-Ping Li.
\newblock The {D}onaldson-{T}homas invariants under blowups and flops.
\newblock {\em J. Differential Geom.}, 90(3):391--411, 2012.

\bibitem{joyce1}
Dominic Joyce.
\newblock Configurations in abelian categories. {I}. {B}asic properties and
  moduli stacks.
\newblock {\em Adv. Math.}, 203(1):194--255, 2006.

\bibitem{joyce2}
Dominic Joyce.
\newblock Constructible functions on {A}rtin stacks.
\newblock {\em J. London Math. Soc. (2)}, 74(3):583--606, 2006.

\bibitem{joyce4}
Dominic Joyce.
\newblock Configurations in abelian categories. {II}. {R}ingel-{H}all algebras.
\newblock {\em Adv. Math.}, 210(2):635--706, 2007.

\bibitem{joyce6}
Dominic Joyce.
\newblock Configurations in abelian categories. {III}. {S}tability conditions
  and identities.
\newblock {\em Adv. Math.}, 215(1):153--219, 2007.

\bibitem{joyce5}
Dominic Joyce.
\newblock Motivic invariants of {A}rtin stacks and `stack functions'.
\newblock {\em Q. J. Math.}, 58(3):345--392, 2007.

\bibitem{joyce7}
Dominic Joyce.
\newblock Configurations in abelian categories. {IV}. {I}nvariants and changing
  stability conditions.
\newblock {\em Adv. Math.}, 217(1):125--204, 2008.

\bibitem{joyce8}
Dominic Joyce.
\newblock Generalized {D}onaldson-{T}homas invariants.
\newblock In {\em Surveys in differential geometry. {V}olume {XVI}. {G}eometry
  of special holonomy and related topics}, volume~16 of {\em Surv. Differ.
  Geom.}, pages 125--160. Int. Press, Somerville, MA, 2011.

\bibitem{joyce_song}
Dominic Joyce and Yinan Song.
\newblock A theory of generalized {D}onaldson-{T}homas invariants.
\newblock {\em Mem. Amer. Math. Soc.}, 217(1020):iv+199, 2012.

\bibitem{kashaev}
R.~M. Kashaev.
\newblock The $q$-binomial formula and {Rogers} dilogarithm identity,
  arxiv:math/0407078.

\bibitem{keller}
Bernhard Keller.
\newblock On cluster theory and quantum dilogarithm identities.
\newblock In {\em Representations of algebras and related topics}, EMS Ser.
  Congr. Rep., pages 85--116. Eur. Math. Soc., Z\"urich, 2011.

\bibitem{kontsevich_soibelman1}
Maxim Kontsevich and Yan Soibelman.
\newblock Stability structures, motivic {Donaldson-Thomas} invariants and
  cluster transformations, arxiv:0811.2435.

\bibitem{kontsevich_soibelman_survey}
Maxim Kontsevich and Yan Soibelman.
\newblock Motivic {D}onaldson-{T}homas invariants: summary of results.
\newblock In {\em Mirror symmetry and tropical geometry}, volume 527 of {\em
  Contemp. Math.}, pages 55--89. Amer. Math. Soc., Providence, RI, 2010.

\bibitem{kresch}
Andrew Kresch.
\newblock Cycle groups for {A}rtin stacks.
\newblock {\em Invent. Math.}, 138(3):495--536, 1999.

\bibitem{lusztig}
G.~Lusztig.
\newblock Canonical bases arising from quantized enveloping algebras.
\newblock {\em J. Amer. Math. Soc.}, 3(2):447--498, 1990.

\bibitem{lusztig2}
G.~Lusztig.
\newblock Quivers, perverse sheaves, and quantized enveloping algebras.
\newblock {\em J. Amer. Math. Soc.}, 4(2):365--421, 1991.

\bibitem{mnop}
D.~Maulik, N.~Nekrasov, A.~Okounkov, and R.~Pandharipande.
\newblock Gromov-{W}itten theory and {D}onaldson-{T}homas theory. {I}.
\newblock {\em Compos. Math.}, 142(5):1263--1285, 2006.

\bibitem{pandharipande_thomas1}
R.~Pandharipande and R.~P. Thomas.
\newblock Curve counting via stable pairs in the derived category.
\newblock {\em Invent. Math.}, 178(2):407--447, 2009.

\bibitem{pandharipande_thomas_survey}
R.~Pandharipande and R.~P. Thomas.
\newblock 13/2 ways of counting curves.
\newblock In {\em Moduli spaces}, volume 411 of {\em London Math. Soc. Lecture
  Note Ser.}, pages 282--333. Cambridge Univ. Press, Cambridge, 2014.

\bibitem{reineke2}
Markus Reineke.
\newblock The {H}arder-{N}arasimhan system in quantum groups and cohomology of
  quiver moduli.
\newblock {\em Invent. Math.}, 152(2):349--368, 2003.

\bibitem{ringel}
Claus~Michael Ringel.
\newblock Hall algebras and quantum groups.
\newblock {\em Invent. Math.}, 101(3):583--591, 1990.

\bibitem{schiffmann_notes2}
Olivier Schiffmann.
\newblock Lectures on canonical and crystal bases of {H}all algebras.
\newblock In {\em Geometric methods in representation theory. {II}}, volume~24
  of {\em S\'emin. Congr.}, pages 143--259. Soc. Math. France, Paris, 2012.

\bibitem{schiffmann_notes}
Olivier Schiffmann.
\newblock Lectures on {H}all algebras.
\newblock In {\em Geometric methods in representation theory. {II}}, volume~24
  of {\em S\'emin. Congr.}, pages 1--141. Soc. Math. France, Paris, 2012.

\bibitem{steinitz}
E.~Steinitz.
\newblock Zur theorie der abel'schen gruppen.
\newblock {\em Jahresbericht der Deutschen Mathematiker-Vereinigung}, 9:80--85,
  1905.

\bibitem{stoppa_thomas}
Jacopo Stoppa and Richard~P. Thomas.
\newblock Hilbert schemes and stable pairs: {GIT} and derived category wall
  crossings.
\newblock {\em Bull. Soc. Math. France}, 139(3):297--339, 2011.

\bibitem{thaddeus}
Michael Thaddeus.
\newblock Stable pairs, linear systems and the {V}erlinde formula.
\newblock {\em Invent. Math.}, 117(2):317--353, 1994.

\bibitem{thomas}
R.~P. Thomas.
\newblock A holomorphic {C}asson invariant for {C}alabi-{Y}au 3-folds, and
  bundles on {$K3$} fibrations.
\newblock {\em J. Differential Geom.}, 54(2):367--438, 2000.

\bibitem{toda7}
Yukinobu Toda.
\newblock Hall algebras in the derived category and higher rank {DT}
  invariants, arxiv:1601.07519.

\bibitem{toda2}
Yukinobu Toda.
\newblock Curve counting theories via stable objects {I}. {DT}/{PT}
  correspondence.
\newblock {\em J. Amer. Math. Soc.}, 23(4):1119--1157, 2010.

\bibitem{toda_survey}
Yukinobu Toda.
\newblock Introduction and open problems of {Donaldson-Thomas} theory.
\newblock In {\em Derived categories in algebraic geometry}, EMS Ser. Congr.
  Rep., pages 289--318. Eur. Math. Soc., Z\"urich, 2012.

\bibitem{toda6}
Yukinobu Toda.
\newblock Curve counting theories via stable objects {II}: {DT}/nc{DT} flop
  formula.
\newblock {\em J. Reine Angew. Math.}, 675:1--51, 2013.

\bibitem{toen}
Bertrand To{\"e}n.
\newblock Higher and derived stacks: a global overview.
\newblock In {\em Algebraic geometry---{S}eattle 2005. {P}art 1}, volume~80 of
  {\em Proc. Sympos. Pure Math.}, pages 435--487. Amer. Math. Soc., Providence,
  RI, 2009.

\bibitem{vdb}
Michel Van~den Bergh.
\newblock Three-dimensional flops and non-commutative rings.
\newblock {\em Duke Math. J.}, 122(3):423--455, 2004.

\end{thebibliography}
\bibliographystyle{plain}

\end{document}